\documentclass[12pt,a4paper]{amsart}
\usepackage[utf8]{inputenc}
\usepackage[style=numeric,backref]{biblatex}%
\renewbibmacro{in:}{\ifentrytype{incollection}{%
		\printtext{\bibstring{in}\intitlepunct}}{%
		\ifentrytype{inproceedings}{%
			\printtext{\bibstring{in}\intitlepunct}}{}%
}}
\renewbibmacro*{volume+number+eid}{\printfield{volume}%
	\setunit*{\addnbthinspace}\printfield{number}%
	\setunit{\addcomma\space}\printfield{eid}%
}
\DeclareFieldFormat[article]{number}{\mkbibparens{#1}}
\DefineBibliographyStrings{english}{%
	bibliography = {References},
}
\usepackage{pythonhighlight}
\usepackage{etoolbox}
\newtoggle{omegas}
\togglefalse{omegas}

\usepackage{thmtools}
\usepackage{amsmath,amsfonts,amssymb,amsthm,graphicx,dsfont}
\usepackage{bbm,xcolor,enumitem, lipsum}
\usepackage[a4paper]{geometry}
\DeclareSymbolFont{bbold}{U}{bbold}{m}{n}
\DeclareSymbolFontAlphabet{\mathbbold}{bbold}

\newtheorem*{theorem*}{Theorem}

\newtheorem{theorem}{Theorem}[section]
\newtheorem{claim}[theorem]{Claim}
\newtheorem{lemma}[theorem]{Lemma}
\newtheorem{corollary}[theorem]{Corollary}
\theoremstyle{definition}
\newtheorem*{definition}{Definition}

\newtheorem{remark}[theorem]{Remark}

\newtheorem*{remark*}{Remark}
\newtheorem*{lemma*}{Lemma}
\newtheorem*{corollary*}{Corollary}

\newtheorem*{namedthm}{\protect\namedthmname}
\newcounter{namedthm}

\makeatletter
\newenvironment{named}[1]
{\def\namedthmname{#1}%
	\refstepcounter{namedthm}%
	\namedthm\def\@currentlabel{#1}}
{\endnamedthm}
\makeatother

\newcommand{\PR}{{\mathcal P}}
\newcommand{\FF}{{\mathbb F}}

\newcommand{\cf}{c_{\overline{\mu}}}

\DeclareMathOperator{\E}{\mathbb E}
\renewcommand{\P}{\operatorname{\mathbb P}}

\usepackage{parskip,mathrsfs,xtab}

\usepackage{setspace}
\usepackage[nodayofweek]{datetime}

\newdateformat{monthyear}{\monthname[\THEMONTH], \THEYEAR}
\usepackage{hyperref}
\hypersetup{
	pdftitle={Squarefree polynomials over function fields},
	pdfsubject={math.NT},
	pdfauthor={Naomi Bazlov},
	pdfkeywords={Analytic number theory, Squarefree polynomials},
}
\title{Squarefree polynomials with missing digits}
\author{Naomi Bazlov}
\email{naomi.bazlov@campus.technion.ac.il}
\begin{document}
	\maketitle
	\begin{abstract}
		We establish an asymptotic formula for squarefree polynomials over a finite field whose coefficients are in $\{0,1\}$. This resolves a function-field analogue of a conjecture of Erd\H{o}s, Mauduit and S\'ark\"ozy. More generally, we estimate the probability of a polynomial $\sum \varepsilon_it^i$ being squarefree, where $\varepsilon_i$ are random variables sampled according to measures on $\FF_q$.
	\end{abstract}
		%
		\section{Introduction}
		In number theory, it is a classical result that the natural density of squarefree integers is $1/\zeta(2)=6/\pi^2\approx 0.6079$. Function fields exhibit the same phenomenon: the proportion of polynomials in $\FF_q[t]$ that are squarefree tends to $1-1/q$ as the degree grows, which is the reciprocal of $\zeta_q(2)$, the function field analogue. 
		The connection between 
		squarefree integers and squarefree polynomials has been well studied: see, for example,~\cite{Poonen_2003},~\cite{oppshu} and~\cite{CARMON}.
		
We consider how the squarefree property is affected when we impose additional constraints on polynomial coefficients. Specifically, we restrict each coefficient to its own set of allowable values, 
		and ask how many polynomials under these restrictions remain squarefree. This question is motivated by analogous problems 
		for integers with certain ``digits'' forbidden in their base-$q$ expansion. 

		The following conjecture, made in 1998 by Erd\H{o}s, Mauduit, S\'ark\"ozy~\cite{Erdos1998},
is an important motivating question for our paper -- proved for $g\le 5$ in 1996~\cite{Filaseta1996SquarefreeVO} by Filaseta and Konyagin.
		\begin{named}{Conjecture}\label{conj}
			Given a base $g\in\mathbb{N}$, there are infinitely many
			square-free integers such that every digit of them in the number system to
			base $g$ is 0 or 1.
		\end{named}
		No further progress has been made since the results in~\cite{Filaseta1996SquarefreeVO}. 
				Erd\H{o}s, Mauduit, S\'ark\"ozy~\cite{Erdos1998} were able to ban up to $g-\sqrt{g}$ digits and obtain results about distribution of squarefree integers. Our main result 
				 can be considered a resolution of the conjecture in the setting of polynomials over finite fields:
\begin{restatable}{theorem}{MAINTHM}\label{thm: main result} Let $q=p^k$ be fixed, $n\to\infty$.
	The number of squarefree, monic polynomials in $\mathbb{F}_q[t]$ of degree $n$ with coefficients only $0$ or $1$ is
	\[\frac34\cdot\frac1{1+1/p}\cdot 2^n(1+O_q(p^{-n^{1/4}})).\]
\end{restatable}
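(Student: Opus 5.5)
The plan is a square sieve with the divisor $D$ split by degree, and local densities computed by Fourier analysis on $\FF_q[t]/(D^2)$. Let $\mathcal F_n$ be the set of monic $f=t^n+\sum_{i<n}\varepsilon_i t^i$ with $\varepsilon_i\in\{0,1\}$, so $|\mathcal F_n|=2^n$. Write
\[
\#\{f\in\mathcal F_n \text{ squarefree}\}=\sum_{D \text{ monic}}\mu(D)\,\#\{f\in\mathcal F_n: D^2\mid f\},
\]
and split $D$ at a threshold $\deg D\le y$ versus $\deg D>y$, with $y$ a small power of $n$; the exponent $n^{1/4}$ in the error term suggests how $y$ should be tuned. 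I have not pinned down where the factor $\frac1{1+1/p}$ comes from, and I expect it to be the crux of the proof.

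\textbf{Small $D$.} For $\deg D\le y$, detect $D^2\mid f$ with additive characters $\psi$ of $\FF_q[t]/(D^2)$. Since the $\varepsilon_i$ are independent,
\[
\E_f\,\psi(f)=\psi(t^n)\prod_{i<n}\frac{1+\psi(t^i)}2 .
\]
A factor equals $1$ only when $\psi(t^i)$ is trivial. For $i$ large, $t^i\bmod D^2$ runs periodically through nonzero classes, with period at most about $q^{2\deg D}$, except at the irreducible $P=t$, where $t^i\equiv 0 \pmod{t^2}$ once $i\ge2$. So every nontrivial $\psi$ not factoring through $t^2$ should give an exponentially small contribution, of size $(1-c)^{n/q^{2y}}$. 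What survives is a product of local factors. At $P=t$ the condition $t^2\mid f$ means $\varepsilon_0=\varepsilon_1=0$, which has probability $1/4$ and gives the $\frac34$. The other primes should contribute $1-|P|^{-2}$, but only if $t^i$ really spreads out mod $P^2$. The subtlety is that the $\FF_p$-span of $\{t^i \bmod P^2\}$ may be a proper subspace, or that characters killing it exist. This is where I would look for $p$ (rather than $q$) to enter: the subgroup generated by $\{0,1\}$ combinations is an $\FF_p$-space, not an $\FF_q$-space. So I would compute the density as $|V_D|^{-1}$, where $V_D$ is the $\FF_p$-span of the $t^i \bmod D^2$, and check that the Euler product of these densities equals $\frac34\cdot\frac{1}{1+1/p}$.

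\textbf{Large $D$.} This is the main obstacle: the Fourier decay degrades once $q^{2\deg D}$ is comparable to $n$. For $y<\deg P\le n/2$ with $P$ irreducible, the condition $P^2\mid f$ is equivalent to $P\mid f$ and $P\mid f'$. In characteristic $p$, $f'=\sum i\varepsilon_i t^{i-1}$ involves only the $\varepsilon_i$ with $p\nmid i$. My first attempt would be the following. Fix the digits $\varepsilon_i$ with $p\mid i$. Then $P\mid f'$ is a condition on the remaining digits alone, and $P\mid f$ then constrains the $p$-indexed digits. Bound each event by $2^{n}\,|P|^{-1}\cdot(\text{small})$ via a large-sieve or completion estimate over the $\{0,1\}$-cube, which gives a saving of roughly $\min(2^{\deg P},\,2^{n-\deg P})$. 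Summing over $P$ with $\deg P>y$ should then leave $O(2^n p^{-n^{1/4}})$, provided $y\asymp n^{1/4}$ and the small-$D$ error is $e^{-c\,n/q^{2y}}$, which is acceptable for that choice of $y$.

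\textbf{Very large $D$.} For $\deg P>n/2$, I would argue that $f=P^2g$ forces $\deg g<n-2\deg P$. Counting such $f$ with $\{0,1\}$ digits directly, over pairs $(P,g)$, then gives at most $q^{\,n-\deg P}$ possibilities. That is $o(2^n)$ only if $\deg P$ is large relative to $\log_2 q$, so the range in between must be covered by the $f,f'$ argument above.
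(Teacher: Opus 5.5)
Your architecture (characters for small moduli, and $P\mid f$ together with $P\mid f'$ for large primes) matches the paper's. The gap is the one you flag yourself: nothing in your plan produces the main term, and the idea that does is a reduction you never make.

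\textbf{The main term.} Every $f$ with digits in $\{0,1\}$ lies in $\FF_p[t]$. Since $\gcd(f,f')$ is unchanged by extending scalars, $f$ is squarefree in $\FF_q[t]$ if and only if it is squarefree in $\FF_p[t]$, so you can take $q=p$ from the outset. That is exactly the paper's proof. Over $\FF_p$ the set $\{0,1\}$ lies in no proper affine subspace, and every nontrivial Fourier coefficient of the uniform measure on it has modulus $|\cos(\pi a/p)|\le\cos(\pi/p)<1$. Corollary~\ref{lem: affine subfield} (Theorem~\ref{thm: general measure squarefree} with $q=p$) then gives local factor $1-\frac14$ at $t$ and $1-p^{-2\deg P}$ at every other $\FF_p$-prime, with $\prod_{P\ne t}(1-p^{-2\deg P})=\frac{1-1/p}{1-1/p^{2}}=\frac{1}{1+1/p}$. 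If you stay over $\FF_q$ with $k\ge 2$, the uniform measure on $\{0,1\}$ has $\hat\mu(\alpha)=1$ for every $\alpha\ne 0$ with $\mathrm{tr}(\alpha)=0$; this is the obstruction you noticed. Your $V_D$ device is the reduction in disguise: for an $\FF_q$-prime $D$, the image of $\FF_p[t]$ in $\FF_q[t]/(D^2)$ is $\FF_p[t]/(Q^2)$, with $Q$ the $\FF_p$-irreducible below $D$. But the check you propose would fail. The conjugate factors $D_1,\dots,D_g$ of one $Q$ all cut out the same event $Q^2\mid f$, so an Euler product of $1-|V_D|^{-1}$ over $\FF_q$-primes gives $(1-p^{-2\deg Q})^{g}$ where the truth is $1-p^{-2\deg Q}$.

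\textbf{The error terms.} The error analysis does not close either, for four reasons.

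First, the period bound $(1-c)^{n/q^{2y}}$ is trivial once $y\asymp n^{1/4}$, since then $q^{2y}\gg n$. The input you need is that a reduced fraction $a/g$ with $g$ not a power of $t$ has no $\deg g$ consecutive zero Laurent coefficients (Claim~\ref{claim: d zeroes}). Over $\FF_p$ this gives $|\E\,\mathbf{e}_p(fa/g)|\le\cos(\pi/p)^{\lfloor n/\deg g\rfloor}$, so the decay is in $n/\deg D$ rather than $n/q^{2\deg D}$.

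Second, cutting the M\"obius sum at $\deg D\le y$ leaves the composite $D$ of large degree built from small primes uncontrolled, because your large-$D$ step only treats primes. The paper sieves only by primes of degree $<m$, truncates with Brun's pure sieve at $\omega(D)\le r$, and uses $\mathbb{P}(N')-\mathbb{P}(N'')\le\mathbb{P}(f\text{ squarefree})\le\mathbb{P}(N')$.

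Third, for large primes you condition the wrong way round and then sum over all primes. A union bound over the $\asymp p^{d}/d$ primes of degree $d$ needs $\mathbb{P}(P^2\mid f)$ well below $p^{-d}$ for each individual $P$ with $d$ up to $n/2$. Nothing in your plan delivers this: the trivial injectivity bound is $4^{-d}$, and $\sum_d 4^{-d}p^{d}/d$ diverges for $p\ge 5$. The paper instead fixes the digits $\varepsilon_i$ with $p\nmid i$, i.e.\ fixes $f'$. A nonzero $f'$ of degree $\le n-1$ has at most $(n-1)/m$ prime factors of degree $\ge m$, so only those $P$ matter. For each of them, the conditional probability of $P\mid f$ given $f'$ decays exponentially in $\deg P$, by a H\"older argument on the free digits $\varepsilon_{pi}$ (Lemma~\ref{lemma: halasz-bound modified}). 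Trivial bounds handle the cases $\deg P>n/p+1$ and $f'=0$.

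Finally, your ``very large'' range $\deg P>n/2$ is empty, since then $\deg P^2>n$.
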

	To generalise this theorem, we ask ourselves the following:
		\begin{named}{General Question}\label{question: gen}
			Let $q$ be a fixed prime power, and $n \in \mathbb{N}$ a variable tending to infinity. If a random monic polynomial $f \in \mathbb{F}_q[t]$ of degree $n$ is generated by sampling each coefficient independently according to some measures on $\FF_q$, what is the asymptotic behavior of $\mathbb{P}(f \text{ is squarefree})$?
		\end{named}	
In the case of a uniform measure over $\FF_q$, the probability of a polynomial being squarefree is $1/\zeta_q(2)$, where $\zeta_q(s)=	\sum_{a\in\mathbb{F}_q[t]\text{ monic}}(q^{\deg a})^{-s}=\sum_{k\ge 0}q^{k-ks}=\frac1{1-q^{1-s}}$. See a short proof of this in~\cite[Remark 11.2.17]{Handbook}. However, we would like to consider probability measures with small support; in terms of coefficients, we would like to ban as many as possible (assigning them probability $0$).
		In Theorem~\ref{thm: main result} we banned all but two coefficients, and in fact, our method works for any two allowed coefficients in $\FF_p$. However, when studying polynomials with more generic allowed coefficients inside $\FF_q$, 
our result requires additional conditions on the set. These conditions are automatically satisfied when $q$ is a prime.
		
		The following theorem can be considered an answer to the~\ref{question: gen}:
		\begin{restatable}{theorem}{GENTHM}\label{thm: general measure squarefree}
			Let $n\to\infty$, and $\mu_0, \mu_1 \dots$ be a sequence of probability measures on $\mathbb{F}_q$, with $\overline{\mu}=(\mu_0,\dots,\mu_{n-1})$. Suppose there exists a constant $\cf$ bounded away from $1$ such that the maximum non-trivial Fourier amplitude
			 satisfies $$\max_i \max_{\alpha \in \mathbb{F}_q^\times} |\hat{\mu}_i(\alpha)| \le \cf.$$ (Here $\hat{\mu_i}$ denotes the discrete Fourier transform, so 
			 $\hat{\mu_i}(\alpha) = \sum \mu_i(\xi) \exp\bigl(\frac{2\pi i\mathrm{tr}(-\alpha\xi)}p\bigr)$.)
			
			Then the probability that a random monic polynomial $f(t) = t^n + \sum_{i=0}^{n-1} \varepsilon_i t^i$, with each coefficient $\varepsilon_i$ drawn independently according to $\mu_i$, is squarefree is given by:
			\[ \mathbb{P}_{\overline{\mu}}(f \text{ is squarefree}) = \kappa_{\overline{\mu}} + O_q(q^{-n^{1/4}}), \]
			where the asymptotic density $\kappa_{\overline{\mu}}$ is:
			\[ \kappa_{\overline{\mu}} = \frac{1}{1+1/q} \big(1 - \mu_0(0)\mu_1(0)\big)=c_0(1-\Lambda). \]
		\end{restatable}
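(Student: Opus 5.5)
The natural route is to expand the squarefree indicator through Möbius inversion over square divisors, $\mathbf 1_{\mathrm{sf}}(f)=\sum_{d^2\mid f}\mu(d)$ with $d$ monic, and to split according to whether $\deg d\le D$ or $\deg d>D$, with $D\asymp n^{1/4}$. For small $d$ we compute $\mathbb P_{\overline{\mu}}(d^2\mid f)$ almost exactly by Fourier analysis on $\FF_q[t]/(d^2)$. For large $d$ we only need an upper bound for the event that $P^2\mid f$ for some prime $P$ with $\deg P>D$.

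\textbf{Small moduli.} Write $d=t^a d'$ with $t\nmid d'$. The condition $t^{2a}\mid f$ says that $\varepsilon_0=\dots=\varepsilon_{2a-1}=0$. Since $d$ is squarefree we have $a\le 1$, so this is either no condition or the condition $\varepsilon_0=\varepsilon_1=0$, of probability $\mu_0(0)\mu_1(0)$.

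The condition $d'^2\mid f$ is handled by orthogonality:
\[\mathbb P(d'^2\mid f\mid \varepsilon_0,\varepsilon_1)=q^{-2\deg d'}\sum_{\lambda}\psi\bigl(\lambda(t^n)\bigr)\prod_{i\ge 2}\hat\mu_i\bigl(\lambda(t^i)\bigr)\cdot(\ldots),\]
where $\lambda$ runs over linear functionals on $\FF_q[t]/(d'^2)$. For $\lambda\ne 0$, the sequence $i\mapsto\lambda(t^i \bmod d'^2)$ satisfies a linear recurrence of order $2\deg d'$. Because $t$ is invertible modulo $d'^2$, this sequence cannot vanish on $2\deg d'$ consecutive indices. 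Hence at least $\gg n/(2\deg d')$ factors have $|\hat\mu_i|\le \cf$.

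This gives
\[\mathbb P(d^2\mid f)=\mu_0(0)^{a}\mu_1(0)^{a}\,q^{-2\deg d'}+O\bigl(\cf^{\,n/(2\deg d')-1}\bigr).\]
Summing $\mu(d)$ times the main term over all $d$ gives the factor $(1-\mu_0(0)\mu_1(0))$ from the choices $a\in\{0,1\}$. It also gives the Euler product $\prod_{P\ne t}(1-q^{-2\deg P})=\zeta_q(2)^{-1}(1-q^{-2})^{-1}$. Their product is $(1-\mu_0(0)\mu_1(0))/(1+1/q)=\kappa_{\overline{\mu}}$.

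The error terms are controlled as follows. The tail $\deg d>D$ of the main-term series contributes $O(q^{-D})$. The Fourier errors total $O\bigl(q^{D}\cf^{\,n/(2D)}\bigr)$, which is $O(q^{-n^{1/4}})$ for $D\approx n^{1/4}$, since $\cf$ is bounded away from $1$.

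\textbf{Large primes (main obstacle).} It remains to show that $\mathbb P(\exists P,\ \deg P>D,\ P^2\mid f)\ll q^{-n^{1/4}}$. Uniformity is no longer available here, because the number of residue classes modulo $P^2$ can exceed $2^n$.

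For moderate degrees, $D<\deg P\le n^{1/2}$ say, the same Fourier argument applied to $P^2$ still gives $\mathbb P(P^2\mid f)\le q^{-2\deg P}+\cf^{\,n/(2\deg P)}$. Summing over the $\le q^{m}/m$ primes of degree $m$ is acceptable in this range.

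For $\deg P>n^{1/2}$ I would use a Poonen-type derivative trick adapted to the random model. Write $f=g(t^p)+h(t)$, where $h$ collects the monomials $t^i$ with $p\nmid i$, so that $f'=h'$. The event $P^2\mid f$ forces $P\mid h'$ and $P\mid f$. I would condition on the coefficients appearing in $h$. If $h'\ne 0$, there are at most $n/\deg P\le n^{1/2}$ candidate primes $P$.

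For each candidate, the event $P\mid g(t^p)+h$ is an anti-concentration statement for the sum of independent random variables $\varepsilon_{pj}\,(t^{pj}\bmod P)$. Since the residues $t^{pj}$ for consecutive $j$ span $\FF_q[t]/(P)$ in blocks of length $\deg P$, each atom has mass at most $\bigl(\max_{i,\xi}\mu_i(\xi)\bigr)^{\min(\deg P,\,n/p)}$, which is at most $\rho^{\min(\deg P,\,n/p)}$ with $\rho<1$ depending on $\cf$. The degenerate case $h'=0$ has exponentially small probability, since every coefficient with index prime to $p$ would have to vanish.

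Primes with $\deg P>n/p$ need the roles of $g$ and $h$ swapped, or a reduction to a gcd bound for $f$ and $f'$. I expect making this last regime rigorous, uniformly in the measures, to be the hardest step.
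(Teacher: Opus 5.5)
\textbf{The gap: truncating by degree does not isolate large primes.} Your treatment of a single modulus is sound and uses the same ingredients as the paper: Fourier expansion modulo $d'^2$ with $t^2$ split off, the fact that there are no $2\deg d'$ consecutive zeros, and the derivative trick for large primes. The global decomposition, however, has a genuine gap.

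You truncate $\mathbf 1_{\mathrm{sf}}(f)=\sum_{d^2\mid f}\mu_{\mathrm{Mob}}(d)$ at $\deg d\le D$. You then assert that the tail $\sum_{\deg d>D,\ d^2\mid f}\mu_{\mathrm{Mob}}(d)$ is controlled by the event that $P^2\mid f$ for some prime with $\deg P>D$. This is false, because large \emph{moduli} are not the same as large \emph{primes}.
\begin{itemize}
\item Let $s(f)$ be the product of the primes whose squares divide $f$. The tail vanishes when $\deg s(f)\le D$, but not in general otherwise.
\item For example, $s(f)=Q_1Q_2$ with $\deg Q_1=\deg Q_2=D$ gives tail $\mu_{\mathrm{Mob}}(Q_1Q_2)=1$, with no large prime involved.
\item When $s(f)$ has many small prime factors, the tail is bounded only by the number of divisors of $s(f)$ of degree $\le D$. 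That is not $O(1)$, so a bound on the probability of one event cannot control it.
\end{itemize}
Your Fourier estimate only covers $\deg d\le D$. So moduli of degree $>D$ built from primes of degree $\le D$ are never estimated.

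\textbf{How the paper avoids this.} It uses the sandwich $\mathbb{P}(N')-\mathbb{P}(N'')\le\mathbb{P}(N)\le\mathbb{P}(N')$. Here $N'$ is ``no $P^2\mid f$ with $\deg P<m$'' and $N''$ is ``some $P^2\mid f$ with $\deg P\ge m$''. Now $N''$ really is a large-prime event of weight one. $\mathbb{P}(N')$ is then evaluated by Brun's pure sieve.
\begin{itemize}
\item The Bonferroni inequalities truncated at $\omega(d)\le r$ bound $\mathbb{P}(N')$ from both sides.
\item Every modulus then has degree $\le rm$, where your Fourier bound applies.
\item The truncation error of the main term is at most $v_1^r/r!$, where $v_1=\sum_{\deg P<m}q^{-2\deg P}=O(1)$.
\item Taking $rm$ a small constant times $\sqrt n$ keeps $(q^m)^r3^r\cf^{\,n/(2rm)}$ small.
\end{itemize}
Alternatively, you can keep a degree truncation at $D'$ for $\mathbf 1_{N'}$. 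Let $s_{<m}(f)$ be the part of $s(f)$ from primes of degree $<m$. If $\deg s_{<m}(f)>D'$, you can greedily find $g\mid s_{<m}(f)$ with $D'<\deg g\le D'+m$. The tail error is then at most $\sum_{D'<\deg g\le D'+m}\tau(g)\,\mathbb{P}(g^2\mid f)$, which your Fourier bound handles.

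\textbf{Two smaller points.}
\begin{itemize}
\item The regime you call hardest is already covered by your own bound. If $\deg P>(n-1)/p$, the $\lfloor (n-1)/p\rfloor+1$ coefficients at multiples of $p$ still inject into $\FF_q[t]/(P)$, since $1,t^p,\dots,t^{p(\deg P-1)}$ is a basis. Also, $h'\ne0$ has fewer than $p$ prime factors of that size. So this range costs at most $p\rho^{\lfloor (n-1)/p\rfloor+1}$, which is exactly the paper's ``the congruence becomes an equality''.
\item Your moderate Fourier range cannot reach $n^{1/2}$. At $m=\sqrt n$ the quantity $q^m\cf^{\,n/(2m)}$ equals $\exp\bigl(\sqrt n(\log q-\tfrac12\log(1/\cf))\bigr)$, which grows when $\cf$ is near $1$. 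You must stop at $\epsilon\sqrt n$ with $\epsilon^2<\log(1/\cf)/(2\log q)$. Your point-mass anticoncentration then covers the rest, and it is a simpler substitute for the paper's H\"older lemma.
\end{itemize}
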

		\begin{remark}
	The constant $c_0=\frac{1}{1+1/q}=\zeta_q^{-1}(2) (1-1/q^2)^{-1}$ represents the squarefree density for all prime polynomials excluding $P=t$ (see~\eqref{c_0}), and $\Lambda=\mu_0(0)\mu_1(0)$ is the probability that $t^2 \mid f$ (see~\eqref{prob.tsquared}). Since the first two coefficients of $f\in\mathbb{F}_q[t]$ being $0$ is equivalent to $t^2\mid f$, this strongly affects whether the polynomial is squarefree. 
\end{remark}
When $\cf=1$, in many cases this completely changes the probabilities. For example, if $\overline{\mu} = (\mu_0, \dots, \mu_{n-1})$ is defined so that $\mu_i(0)=1$ for $p\nmid i$ (the Dirac delta measure) and $\mu_{kp}$ are uniform on all of $\FF_q$, we get $\mathbb{P}_{\overline{\mu}}(f \text{ is squarefree})=0$ when $n$ is a multiple of $p$. We will talk more about $\cf=1$ in Section~\ref{section: c=1}.
	
From Theorem~\ref{thm: general measure squarefree} we recover counting formulas for squarefree polynomials with restricted digits by using probability measures uniform on a restricted subset of $\FF_q$.
\begin{restatable}{corollary}{GENFKTHM}\label{thm: gen sec 4}
	Let $\overline{\mathcal{R}}=(\mathcal{R}_0,\dots,\mathcal{R}_{n-1})\subset \mathbb{F}_{p^k}^n\subset \FF_q^n$ be an ordered collection of subsets of size $s$, such that $\mathbb{F}_{p^k}\setminus\mathcal{R}_i$ is not contained in a proper affine subspace of $\FF_{p^k}$. The number of squarefree, monic polynomials of degree $n$ with the coefficient of $t^{i}$ only from $\mathbb{F}_{p^k} \backslash \mathcal{R}_i$ is given by
	\[\kappa_{\overline{\mathcal{R}}}(p^k-s)^n(1+O_q(p^{-kn^{1/4}})),\] where
	$\kappa_{\overline{\mathcal{R}}}=\begin{cases}
		\frac1{1+1/p^k}&\text{if } 0 \in \mathcal{R}_0 \text{ or } 0 \in \mathcal{R}_1\\
		\frac1{1+1/p^k}\cdot (1-\frac1{(p^k-s)^2})&\text{if } 0 \not\in \mathcal{R}_0 \text{ and } 0 \not\in \mathcal{R}_1.
	\end{cases}$ 
\end{restatable}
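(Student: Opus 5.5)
We apply Theorem~\ref{thm: general measure squarefree} with $\mu_i$ the uniform probability measure on the allowed set $\mathcal{A}_i:=\mathbb{F}_{p^k}\setminus\mathcal{R}_i$, viewed as a measure on $\FF_q$ supported inside the subfield $\mathbb{F}_{p^k}$. Here $|\mathcal{A}_i|=p^k-s$. Since each coefficient is drawn uniformly and independently, every admissible monic polynomial has probability $(p^k-s)^{-n}$. So the count equals $(p^k-s)^n\,\mathbb{P}_{\overline{\mu}}(f\text{ is squarefree})$, and it remains to check the hypothesis and compute $\kappa_{\overline{\mu}}$.

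\textbf{Checking the Fourier bound.} For $\alpha\in\FF_q^\times$ we have
\[\hat\mu_i(\alpha)=\frac1{|\mathcal{A}_i|}\sum_{\xi\in\mathcal{A}_i}\psi(\alpha\xi),\qquad \psi(x)=\exp\bigl(-2\pi i\,\mathrm{tr}(x)/p\bigr).\]
By the triangle inequality $|\hat\mu_i(\alpha)|\le1$. Equality holds iff $\xi\mapsto\psi(\alpha\xi)$ is constant on $\mathcal{A}_i$. The map $\xi\mapsto \mathrm{tr}_{\FF_q/\FF_p}(\alpha\xi)$ restricted to $\mathbb{F}_{p^k}$ is $\FF_p$-linear. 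So equality means $\mathcal{A}_i$ lies in a single coset of its kernel $H_\alpha$.
\begin{itemize}
\item If $H_\alpha\neq\mathbb{F}_{p^k}$, this coset is a proper affine subspace, which the hypothesis on $\mathcal{A}_i$ rules out.
\item The map cannot vanish identically on $\mathbb{F}_{p^k}$, since the trace form is nondegenerate. Indeed $\mathrm{tr}_{\FF_q/\FF_p}(\alpha\xi)=\mathrm{tr}_{\mathbb{F}_{p^k}/\FF_p}(\xi\,\mathrm{tr}_{\FF_q/\mathbb{F}_{p^k}}(\alpha))$, and if the inner trace is $0$ the functional vanishes. This subcase needs care. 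When $q=p^k$ it does not arise. In general one presumably reads the corollary with $q=p^k$, as the error term $p^{-kn^{1/4}}$ suggests, and I will adopt that reading.
\end{itemize}
Hence $|\hat\mu_i(\alpha)|<1$ for every $\alpha\neq0$.

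There are only finitely many possible subsets $\mathcal{A}_i$ and characters, so we may take $\cf$ to be the maximum of $|\hat\mu_i(\alpha)|$ over all admissible sets and all $\alpha\neq0$. This is a constant depending only on $q$ and strictly less than $1$, giving the uniformity Theorem~\ref{thm: general measure squarefree} requires. I expect this step, turning ``not in a proper affine subspace'' into a uniform bound $\cf<1$ and handling the subfield issue, to be the main point needing care.

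\textbf{Computing the constant.} Theorem~\ref{thm: general measure squarefree} gives
\[\mathbb{P}=\frac{1}{1+1/q}\bigl(1-\mu_0(0)\mu_1(0)\bigr)+O_q(q^{-n^{1/4}}).\]
Here $\mu_j(0)=0$ if $0\in\mathcal{R}_j$, and $\mu_j(0)=1/(p^k-s)$ otherwise. So $\Lambda=0$ when $0\in\mathcal{R}_0$ or $0\in\mathcal{R}_1$, and $\Lambda=(p^k-s)^{-2}$ otherwise, matching the two cases of $\kappa_{\overline{\mathcal{R}}}$.

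Finally, $\kappa_{\overline{\mathcal{R}}}$ is bounded below by a positive constant, namely $c_0(1-1/4)$ when $p^k-s\ge2$. Note that $p^k-s\ge2$ follows from $\mathcal{A}_i$ not lying in an affine subspace, since a single point is a proper affine subspace. Therefore the additive error converts to the multiplicative form $(1+O_q(p^{-kn^{1/4}}))$, using $q=p^k$.
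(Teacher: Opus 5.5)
\textbf{Verdict: the proposal proves the corollary only for $q=p^k$. The case $q>p^k$, which the statement explicitly includes, is set aside by reinterpretation, and your route cannot handle it.}

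For $q=p^k$ your argument is correct and is essentially the paper's. You take uniform measures on the allowed sets, observe that $|\hat\mu_i(\alpha)|=1$ would force the allowed set into a coset of the kernel of $\xi\mapsto\mathrm{tr}(\alpha\xi)$, get a uniform $\cf<1$ by finiteness, and apply Theorem~\ref{thm: general measure squarefree} with $\Lambda\in\{0,(p^k-s)^{-2}\}$. Your extra checks are also fine: $p^k-s\ge 2$, hence $\kappa_{\overline{\mathcal{R}}}\ge\frac34 c_0$, so the additive error becomes multiplicative.

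The problem is the case $q>p^k$, which the statement allows via $\mathbb{F}_{p^k}^n\subset\FF_q^n$. This is not a subcase that ``needs care''; your route fails outright there. The relative trace $\mathrm{tr}_{\FF_q/\mathbb{F}_{p^k}}$ is surjective with kernel of size $q/p^k>1$. For any nonzero $\alpha$ in that kernel, $\mathrm{tr}_{\FF_q/\FF_p}(\alpha\xi)=\mathrm{tr}_{\mathbb{F}_{p^k}/\FF_p}\bigl(\xi\,\mathrm{tr}_{\FF_q/\mathbb{F}_{p^k}}(\alpha)\bigr)=0$ for every $\xi\in\mathbb{F}_{p^k}$, so $\hat\mu_i(\alpha)=1$. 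Hence $\cf=1$ for every admissible collection, and Theorem~\ref{thm: general measure squarefree} over $\FF_q$ is never available. Even formally it would give the main term $\frac{1}{1+1/q}(1-\Lambda)$, not the stated $\frac{1}{1+1/p^k}(1-\Lambda)$. That mismatch already shows the statement's constant is intrinsically an $\mathbb{F}_{p^k}$ quantity.

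The missing idea, which is how the paper obtains this corollary from Corollary~\ref{lem: affine subfield}, is base change:
\begin{enumerate}
\item Every polynomial being counted lies in $\mathbb{F}_{p^k}[t]$.
\item For such $f$, being squarefree in $\FF_q[t]$ is equivalent to being squarefree in $\mathbb{F}_{p^k}[t]$. Over a finite, hence perfect, field, squarefree is equivalent to $\gcd(f,f')=1$. The gcd is computed by the Euclidean algorithm inside $\mathbb{F}_{p^k}[t]$, so it does not change under field extension.
\item Therefore the count over $\FF_q$ equals the count over $\mathbb{F}_{p^k}$.
\item Your $q=p^k$ argument then applies verbatim with $q$ replaced by $p^k$. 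This gives exactly the stated $\kappa_{\overline{\mathcal{R}}}$ and the error $O(p^{-kn^{1/4}})$.
\end{enumerate}
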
		
In Section~\ref{section: restrict coeff intro}, we prove 
exponential sum bounds related to counting polynomials with missing digits divisible by a given polynomial. The results are inspired by a paper of Porritt~\cite{Porritt2019}, who studied prime polynomials over $\FF_q$ whose non-leading coefficients are drawn from restricted subsets of $\mathbb{F}_q$, each missing $s$ digits (our measure theory approach generalises this). Porritt adapted some methods from Maynard~\cite{Maynard2021} who  made great progress on the problem of integer primes with restricted digits. In the integers it is currently impossible to ban more than $O(q^{1/5})$ digits in large bases $q$~\cite{granville2023}, but in $\FF_q[t]$, Porritt can ban up to $\sqrt{q}/2$ banned digits.

In Section~\ref{section: restrict coeff main} we apply some methods of dealing with polynomials over $\mathbb{F}_q$ used in~\cite{CARMON}, 
and we prove Theorem~\ref{thm: general measure squarefree} by combining the Brun sieve for small prime factors with a probabilistic 
bound for large prime factors, as in~\cite{universality}. We derive an explicit expression for the proportion of squarefree polynomials, up to an $O_q(q^{-n^{1/4}})$ error term. Section~\ref{section: ban more} addresses the case where the coefficients' random choice is uniform, giving in Corollary~\ref{lem: affine subfield} an asymptotic formula for the number of squarefree polynomials when each coefficient set is not a proper affine subspace of $\FF_q$. We discuss special cases where the choice of coefficients is restricted to small sets with a specific additive structure imposed; most importantly, when the only allowed coefficients are 0 or 1, proving Theorem~\ref{thm: main result}. Sections~\ref{section: large char} and~\ref{sec: vary all} discuss what happens when $q$ is not fixed, and finally, Section~\ref{section: c=1} analyses some special cases when $\cf=1$.
		\section{Setup of the Problem}\label{section: restrict coeff intro}
		In this section, we explain the setup needed to tackle the~\ref{question: gen}.
		As in~\cite{Porritt2019}, we work inside the field of Laurent series $\FF_q((t))$, which is the $1/t$-adic completion of $\mathbb{F}_q(1/t)=\mathbb{F}_q(t)$. The subring $\FF_q[[1/t]]$ has discrete valuation arising from the norm $\|f/g\|=q^{\deg f-\deg g}$ for nonzero $f$, $0$ for $f=0$, for $f/g$ a rational function, and the unique maximal ideal is
		$$
		\mathbb{T}=\frac1t\mathbb{F}_q[[1/t]].
		$$
The following straightforward fact will be useful in some of our proofs; it is implicit in many works on the circle method in $\FF_q[t]$, for instance in Porritt~\cite[Lemma 3]{Porritt2019}, who was inspired by Pollack~\cite[Lemma 7]{Pol}.
\begin{claim}\label{claim: d zeroes}
	If $a,g\in\FF_q[t]$ are coprime polynomials, $\deg a<\deg g=d$ and $g$ is not a power of $t$, then $a/g$ has no $d$ consecutive zeroes in its Laurent series expansion.
\end{claim}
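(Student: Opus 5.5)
Write $a/g=\sum_{j\ge1} c_j t^{-j}$ with $c_j\in\FF_q$. The argument is by contradiction: suppose there is an index $k\ge 0$ with $c_{k+1}=\dots=c_{k+d}=0$. The idea is to shift this window of zeroes to the front of the expansion by multiplying by $t^k$ and discarding the polynomial part. Doing so produces a new proper fraction $b/g$, with $b\equiv t^k a \pmod g$ and $\deg b<d$. Its expansion is $\sum_{j\ge1}c_{k+j}t^{-j}$, which begins with $d$ zeroes.

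The first step is to show that such a $b/g$ must vanish. Since $b/g$ has no terms $t^{-1},\dots,t^{-d}$, we get $\|b/g\|\le q^{-d-1}$. On the other hand $\|g\|=q^d$, so $\|b\|=\|g\|\cdot\|b/g\|\le q^{-1}<1$. As $b$ is a polynomial, this forces $b=0$, and hence $g\mid t^k a$.

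The second step uses the hypotheses to reach a contradiction. Because $\gcd(a,g)=1$, the divisibility $g\mid t^ka$ gives $g\mid t^k$. Every divisor of $t^k$ is a constant multiple of a power of $t$. If $g$ is taken monic, or more generally up to a unit (the norm statement is unaffected by scaling $g$), this means $g$ is a power of $t$, contradicting the hypothesis. The degenerate case $d=0$ cannot occur, since $\deg a<0$ would force $a=0$, which is not coprime to $g$ unless $g$ is constant. So $d\ge1$ and the argument goes through.

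None of these steps is really an obstacle. The only point needing care is the bookkeeping in the shift. We need $t^k a = Q g + b$ with $Q$ a polynomial and $b/g$ equal to the fractional tail $\sum_{j\ge1}c_{k+j}t^{-j}$. This holds because the polynomial part of $t^k a/g$ is exactly $\sum_{j\le k}c_jt^{k-j}$, so the remainder is indeed that tail, of norm at most $q^{-d-1}$.
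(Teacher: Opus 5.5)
Your proof is correct and is the paper's argument. You multiply by $t^k$ to move the window of zeroes to the front, observe that the fractional part $b/g$ (with $b \equiv t^k a \pmod g$) has norm at most $q^{-d-1}$, and deduce from $\|g\|=q^d$ that $b=0$, so $g\mid t^k a$. The paper phrases the same step contrapositively: a nonzero remainder would have norm at least $q^{-d}$.

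You also make explicit two points the paper leaves implicit. First, coprimality turns $g\mid t^ka$ into $g\mid t^k$. Second, the hypothesis must be read with $g$ monic, or ``not a power of $t$'' understood up to a unit. Without this, the statement fails, for example for $g=ct$ with $c\neq 1$ and $a=1$, where $a/g=c^{-1}t^{-1}$ is followed by infinitely many zeroes.
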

\begin{proof}
Let $a/g=\sum_{i<0}\gamma_it^i$. Suppose $\gamma_{-c-1},\dots,\gamma_{-c-d}=0$ for some $c\ge 0$. Then $\|\{t^ca/g\}\|\le q^{-d-1}$, where $\{\cdot\}$ denotes the fractional part of the polynomial -- that is, the part of the Laurent series with negative powers of $t$. But this is impossible as $g$ does not divide $t^ca$, so dividing with remainder gives a nonzero result with norm at least $q^{-d}=\|1/g\|$.
\end{proof}
We make use of the following throughout this paper:
\begin{definition}\
\begin{itemize}
\item Define $\psi \colon \mathbb{F}_q \rightarrow \mathbb{C}^\times$ by $\psi(a) = \exp(2\pi i\mathrm{tr}(a)/p)$, where $\text{tr} \colon \mathbb{F}_q \rightarrow \mathbb{F}_p$ is the usual trace map. 
\item Define also the additive character $\textbf{e}_q \colon \mathbb{F}_q((1/t))\rightarrow \mathbb{C}^\times$ by $\textbf{e}_q (x) = \psi(x_{-1})$.
\end{itemize}\end{definition}
		Pick, for each $i$, a probability measure $\mu_i$ on $\FF_q$ for the coefficient of $t^i$ in the polynomial, and then independently choose all  coefficients according to the respective distributions.
We now introduce some relevant notation:
\begin{definition}
	Let 
	$\overline{\mu}=(\mu_0,\mu_1,\dots,\mu_{n-1})$ be a multivector of probability measures on $\FF_q$. Consider $\overline{\mu}$ as a (finite) measure on $\mathcal{M}_n$, the (finite) set of monic polynomials in $\FF_q[t]$ of degree $n$, via $\overline{\mu}(f)=\prod_i\mu_i(f_i)$. Here 
	$$f=t^n+\sum_if_it^i\in\mathcal{M}_n,$$ where $f_i$ are sampled according to $\mu_i$.
	
	For $x\in\mathbb{T}$, define the Fourier transform of our random $f$ as
	\[S_{\overline{\mu}}(x)=\mathbb{E}_{f\sim \overline{\mu}}\mathbf{e}_q(fx)=\sum_{f\in\mathcal{M}_n}\overline{\mu}(f)\textbf{e}_q(fx).\]
\end{definition}
This definition appeared in a previous work on universality questions~\cite{universality}.
		
		We adapt~\cite[Lemma 3]{Porritt2019} to fit our setting:
\begin{lemma}\label{lemma: measure theory}
	Let $\mu_i$ be probability measures on $\mathbb{F}_q$, $\overline{\mu}=(\mu_0,\dots,\mu_{n-1})$.	Let $a,g\in\mathbb{F}_q[t]$ be coprime polynomials, with $\|a\|<\|g\|$, $\deg g=d$, and $g$ not a power of $t$. Write $a/g=\sum_{i<0}\alpha_it^i$.
	Then
	\[|S_{\overline{\mu}}(a/g)|\le  \cf^{\lfloor \frac nd \rfloor},\] where
	$\cf=\max_i\max_{\alpha\in\mathbb{F}_q^\times}|\hat{\mu_i}(\alpha)|$,
	where $\hat{\mu_i}$ is the discrete Fourier transform of the measure $\mu_i$, such that
	$\hat{\mu_i}(\alpha) = \sum \mu_i(\xi) \psi(-\alpha\xi)$.
\end{lemma}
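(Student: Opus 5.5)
The plan is to factor $S_{\overline{\mu}}(a/g)$ coordinatewise and then bound each factor using Claim~\ref{claim: d zeroes}. Write $x=a/g=\sum_{i<0}\alpha_it^i$ and $f=t^n+\sum_{j=0}^{n-1}f_jt^j$. The coefficient of $t^{-1}$ in $fx$ is
\[
(fx)_{-1}=\alpha_{-n-1}+\sum_{j=0}^{n-1}f_j\alpha_{-j-1}.
\]
Since $\psi$ is an additive character and the $f_j$ are independent, the expectation splits into a product:
\[
S_{\overline{\mu}}(x)=\psi(\alpha_{-n-1})\prod_{j=0}^{n-1}\sum_{\xi\in\FF_q}\mu_j(\xi)\psi(\xi\alpha_{-j-1})=\psi(\alpha_{-n-1})\prod_{j=0}^{n-1}\hat{\mu_j}(-\alpha_{-j-1}).
\]

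Next bound the factors individually. Every factor has absolute value at most $1$, because $\mu_j$ is a probability measure. If $\alpha_{-j-1}\neq 0$, then $-\alpha_{-j-1}\in\FF_q^\times$, so by the definition of $\cf$ the factor has absolute value at most $\cf$. It follows that
\[
|S_{\overline{\mu}}(a/g)|\le \cf^{N},\qquad N=\#\{0\le j\le n-1:\alpha_{-j-1}\neq0\}.
\]

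It remains to show $N\ge\lfloor n/d\rfloor$, and this is where Claim~\ref{claim: d zeroes} enters. Its hypotheses are exactly those of the lemma: $a,g$ coprime, $\deg a<\deg g=d$, and $g$ not a power of $t$. Split the indices $-1,\dots,-n$ into $\lfloor n/d\rfloor$ disjoint blocks of $d$ consecutive indices, namely $\{-kd-1,\dots,-kd-d\}$ for $0\le k<\lfloor n/d\rfloor$. By Claim~\ref{claim: d zeroes}, $a/g$ has no $d$ consecutive zero coefficients, so each block contains some nonzero $\alpha_i$. All these indices lie in $\{-1,\dots,-n\}$, so each nonzero $\alpha_i$ found corresponds to a valid $j=-i-1\in\{0,\dots,n-1\}$. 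Hence $N\ge\lfloor n/d\rfloor$, and since $\cf\le1$ we get $|S_{\overline{\mu}}(a/g)|\le\cf^{\lfloor n/d\rfloor}$.

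No step is a real obstacle; the only care needed is the index bookkeeping. One must check that coefficient $f_j$ pairs with $\alpha_{-j-1}$, and that the leading $t^n$ term contributes only the unimodular phase $\psi(\alpha_{-n-1})$. Sign conventions in $\hat{\mu_j}$ are harmless, because $\cf$ is a maximum over all $\alpha\in\FF_q^\times$, a set closed under negation.
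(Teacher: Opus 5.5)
Your proposal is correct and follows essentially the same route as the paper. Like the paper, you factor $S_{\overline{\mu}}(a/g)$ into a unimodular phase times $\prod_j \hat{\mu}_j(-\alpha_{-j-1})$, bound each factor with nonzero frequency by $\cf$, and apply Claim~\ref{claim: d zeroes} to $\lfloor n/d\rfloor$ disjoint blocks of $d$ consecutive coefficients to get at least that many such factors; your explicit tracking of the sign and of $\cf\le 1$ is slightly more careful than the paper's write-up.
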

		\begin{proof}
			Since $\|a\|<\|g\|$, we may indeed write $a/g=\sum_{i<0}\alpha_it^i$; dividing both numerator and denominator by $t^{\deg g}$ shows all $\alpha_i$ for $i\ge 0$ are $0$. (Alternatively, one may look at the norm.) So in particular, $a/g$ is an element of $\mathbb{T}$.
			By definition of $S_{\overline{\mu}}$, we have
			$S_{\overline{\mu}}(a/g) = \mathbb{E}_{f \sim \overline{\mu}} [\mathbf{e}_q(fa/g)]$. Note that the leading term of any sampled $f$ is $t^n$, as these are monic polynomials of degree $n$. Hence, by additivity of the character $\textbf{e}_q$, we have for $x=a/g\in\mathbb{T}$ that $\textbf{e}_q(fx)=\textbf{e}_q(t^n\cdot x)\textbf{e}_q(f_{n-1}t^{n-1}\cdot x)\textbf{e}_q(f_{n-2}t^{n-2}\cdot x)\dots$. Since the coefficients $f_i$ are sampled independently according to the measures $\mu_i$, the expectation of the product is the product of the expectations:	
			$$S_{\overline{\mu}}(a/g) = \mathbf{e}_q(t^n \cdot a/g) \prod_{i=0}^{n-1} \mathbb{E}_{f_i \sim \mu_i} [\psi(f_i \alpha_{-i-1})]$$
			
			Here we used that $\textbf{e}_q(x)=\psi(x_{-1})$, where $\psi(a)=\exp(2\pi i\mathrm{tr}(a)/p)$. Note that by definition of $\textbf{e}_q$, it has absolute value $1$, so 
			\[ |S_{\overline{\mu}}(a/g)|=\prod_{i=0}^{n-1}\left|\mathbb{E}_{f_i \sim \mu_i} [\psi(f_i \alpha_{-i-1})]\right|.\]
Each factor in the product is the Fourier transform of $\mu_i$ evaluated at the Laurent coefficient $\alpha_{-i-1}$, which is $\hat{\mu}_i(\alpha_{-i-1})$. If $\alpha_{-i-1} = 0$, then $\hat{\mu}_i(0) = \sum_{y \in \mathbb{F}_q} \mu_i(y)\le 1$. If $\alpha_{-i-1} \neq 0$, then by definition $|\hat{\mu}_i(\alpha_{-i-1})| \leq \cf = \max_i \max_{x \neq 0} |\hat{\mu}_i(x)|$.
			
Let us say that $\alpha_{-i-1}\ne 0$ for $z$ different values of $i$, giving a factor of $\cf^z$ for these $i$, and the coefficients that are $0$ give a factor of $1$. Now, if we split the coefficients $\alpha_{-n},\dots,\alpha_{-1}$ into $\lfloor \frac nd \rfloor$ groups of consecutive coefficients, each has nonzero coefficients by Claim~\ref{claim: d zeroes}, so $z\ge \lfloor \frac nd \rfloor$.
		\end{proof}	 Note that taking measures $\mu_i$ as uniform probability measures $\rho_{i,\mathcal{R}_i}$ with supports $\FF_q\setminus\mathcal{R}_i$ of size $q-s$, we get back the kind of setting studied by Porritt.
		\section{Asymptotics of squarefrees under general measures}\label{section: restrict coeff main}
		
We denote by $\mathcal{P}$ the set of irreducible polynomials in $\FF_q[t]$, and by $\mathcal{P}^{<k}$, $\mathcal{P}^{\le k}$, $\mathcal{P}^{\ge k}$. the set of irreducible polynomials of degree $<k$, $\le k$ and $\ge k$ respectively.
		
		We wish to estimate the probability that a random monic polynomial $f$ of degree $n$, with coefficients sampled according to the product probability measure $\overline{\mu} = \prod_{i=0}^{n-1} \mu_i$, is squarefree.
		
		Let 
		$N=\{f\text{ is squarefree}\} = \{f \in \mathcal{M}_n : \forall P \in \mathcal{P}, P^2 \nmid f\}.$
		To estimate $\mathbb{P}_{\overline{\mu}}(N) = \mathbb{E}_{\overline{\mu}}[\mathbbm{1}\{\forall P \in \mathcal{P}: P^2 \nmid f\}]$, we introduce two auxiliary events:
		\begin{itemize}
			\item $N' = \bigcap_{P \in \mathcal{P}^{<m_1}} \{f \in \mathcal{M}_n : P^2 \nmid f\}$ is the event that $f$ is not divisible by the square of any ``small'' prime.
			\item $N'' = \bigcup_{P \in \mathcal{P}^{\ge m_2}} \{f \in \mathcal{M}_n : P^2 \mid f\}$ is the event that $f$ is divisible by the square of at least one ``large'' prime.
		\end{itemize}
%
Any squarefree $f$ is, in particular, not divisible by the square of any prime of bounded degree, so $N \subseteq N'$. Setting $m_1 \ge m_2$, we have $N \supseteq N' \setminus N''$, as polynomials not divisible by squares of any `small' prime must either be squarefree or be divisible by the square of some `large' prime. Hence
\begin{equation}\label{eq: sec 3 start}\mathbb{P}_{\overline{\mu}}(N') - \mathbb{P}_{\overline{\mu}}(N'') \le \mathbb{P}_{\overline{\mu}}(N) \le \mathbb{P}_{\overline{\mu}}(N').\end{equation}
		
		We will show that $\mathbb{P}_{\overline{\mu}}(N'')$ is negligible. Together with the two inequalities, this tells us that being squarefree is asymptotically indistinguishable from not being divisible by the square of a `small' prime.
		\begin{itemize}
			\item To find the main term from the `small' primes, it is natural to approach using combinatorial techniques such as the inclusion-exclusion principle, but we will make this more efficient by using Brun's pure sieve. We  estimate local probabilities $\mathbb{P}_{\overline{\mu}}(D^2 \mid f)$ by evaluating the exponential sums via Lemma~\ref{lemma: measure theory}.
			\item To handle $N''$ and show that the ``large'' primes give a negligible contribution, we translate the quadratic condition $P^2 \mid f$ into the equivalent linear conditions $P \mid f$ and $P \mid f'$. This is an advantage of working with polynomials rather than integers; it has been successfully used in works on squarefrees, for instance by Poonen~\cite{Poonen_2003} and Carmon~\cite{CARMON}. We proceed as follows:
			\begin{enumerate}[label=(\roman*)]
				\item Let $T_n$ be the set of all possible derivatives of polynomials in $\mathcal{M}_n$. A union is bounded by a sum, thus
				$$\mathbb{P}_{\overline{\mu}}(N'') \le \sum_{d\in T_n} \mathbb{P}_{\overline{\mu}}(f'=d) \sum_{\substack{P\mid d \\ P\in\mathcal{P}^{>m_2}}} \mathbb{P}(P \text{ divides } f \mid f'=d)$$
				where we also rearranged to first sum over all possible derivatives $d$. We can do this because any $f$ which contributes to $\mathbb{P}_{\overline{\mu}}(N'')$ has derivative in $T_n$, and $P$ is a factor of $f, f'$ so all of its contribution has been counted.
				\item By fixing the derivative $f'$, we determine all coefficients except those corresponding to $p$-th powers. We then use a probabilistic H{\"o}lder-type bound established in Lemma~\ref{lemma: halasz-bound modified} to show that $\mathbb{P}_{\overline{\mu}}(P\text{ divides } f \mid f'=d)$ decays exponentially with the degree of $P$.
				\item By counting large prime factors of $d$, we bound the number of summands in the inner sum by $\frac{n-1}{m_2}$.
				
				There is an edge case: the derivative vanishes ($d=0$), i.e. $f$ is a perfect $p$-th power, which means coefficients not associated with $p$-th powers are exactly zero; this is dealt with separately. 
			\end{enumerate}
		\end{itemize}
		\subsection{Estimating the asymptotics}	
		
		Note that the probability $\mathbb{P}_{\overline{\mu}}(D^2\mid f)$ that a squarefree polynomial $D$ divides $f$, where $f\in\mathcal{M}_n$ is sampled according to $\overline{\mu}$, is given by the expectation of the indicator function, which can be expanded using orthogonality relations of additive characters as below. These were first applied to polynomials over finite fields by Hayes~\cite[Theorem 3.7]{Hayes1966}.
		\begin{equation*}
			\mathbb{P}_{\overline{\mu}}(D^2\mid f) = \mathbb{E}_{\overline{\mu}}[\mathbbm{1}_{D^2\mid f}] = \sum_{f\in\mathcal{M}_n}\overline{\mu}(f)\frac1{\|D^2\|}\sum_{\deg a<\deg D^2}\textbf{e}_q\left( \frac{af}{D^2}\right).
		\end{equation*}
		Swapping the order of summation gives an expression in terms of the exponential sum $S_{\overline{\mu}}$:
		\begin{equation}\label{Sr expression}
			\mathbb{P}_{\overline{\mu}}(D^2\mid f) = \frac1{\|D^2\|}\sum_{\deg a<\deg D^2}S_{\overline{\mu}}\Bigl(\frac{a}{D^2}\Bigr).
		\end{equation}
		
		When $a=0$, we get the main term $\frac1{\|D^2\|}$. 
		For the terms where $a \ne 0$, we take out common factors of $a$ and $D^2$, so that the numerator and denominator are coprime as is necessary to apply Lemma~\ref{lemma: measure theory}.
		However, Lemma~\ref{lemma: measure theory} is not applicable when the denominator is a power of $t$, so we handle divisibility by $t^2$ separately. Since the coefficients $f_0\sim\mu_0$ and $f_1\sim\mu_1$ are chosen independently, 
		we have:
		\begin{equation}
			\mathbb{P}_{\overline{\mu}}(t^2\mid f)=\mathbb{P}_{\overline{\mu}}(f_0=f_1=0)=:\Lambda= \mu_0(0)\mu_1(0). \label{prob.tsquared}
		\end{equation}
		
\begin{lemma}\label{lem: estimating N'}
	Let $N'$ be the event that $f$ is not divisible by the square of any prime $P$ with $\deg P < m_1$. For large enough $r > 0$ such that $r<m_1/2$,
	\[ \mathbb{P}_{\overline{\mu}}(N') = \kappa_{\overline{\mu}} \left(1 + O\left(\frac{1}{m_1q^{m_1}}\right) + O((\gamma r)^{-r}) + O\left( \left(\frac{q^{m_1}}{m_1}\right)^r 3^r \cf^{\frac{n}{2rm_1}} \right) \right), \]
	where $\kappa_{\overline{\mu}} = \frac{1}{1+1/q}(1-\mu_0(0)\mu_1(0))=\frac{1}{1+1/q}(1-\Lambda)$, and $\gamma$ is a suitably chosen constant.
\end{lemma}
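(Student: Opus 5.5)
We separate off the prime $t$ and treat the remaining small primes with Brun's pure sieve. Let $\mathcal{Q}=\mathcal{P}^{<m_1}\setminus\{t\}$ and $D_0=\prod_{P\in\mathcal{Q}}P$. Then $N'=\{t^2\nmid f\}\cap\{P^2\nmid f\ \forall P\in\mathcal{Q}\}$. By inclusion--exclusion on the event $t^2\mid f$,
\[
\mathbb{P}_{\overline{\mu}}(N')=\sum_{D\mid D_0}\mu(D)\Bigl(\mathbb{P}_{\overline{\mu}}(D^2\mid f)-\mathbb{P}_{\overline{\mu}}(t^2D^2\mid f)\Bigr),
\]
where $\mu$ here denotes the Möbius function on squarefree $D$. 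Brun's pure sieve replaces the full sum by its truncation to $\omega(D)\le 2r$ (or $2r+1$). The error is bounded by the tail $\sum_{\omega(D)=2r+1}\mathbb{P}(D^2\mid f)$ and similar terms. Each of these will be controlled by the main-term size $\sum_{\omega(D)=k}\|D\|^{-2}\le \frac{1}{k!}\bigl(\sum_{P}\|P\|^{-2}\bigr)^k$ plus the exponential-sum errors. Since $\sum_P\|P\|^{-2}$ is bounded, Stirling gives the $O((\gamma r)^{-r})$ term.

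For each $D$ with $\omega(D)\le 2r$, I expand $\mathbb{P}(D^2\mid f)$ and $\mathbb{P}(t^2D^2\mid f)$ via \eqref{Sr expression}. The $a=0$ term gives $\|D\|^{-2}$. For $a\neq0$, reduce $a/D^2=a'/g$ in lowest terms. Since $t\nmid D$, the denominator $g\mid D^2$ is not a power of $t$, so Lemma~\ref{lemma: measure theory} gives $|S_{\overline{\mu}}|\le \cf^{\lfloor n/\deg g\rfloor}$.

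For the $t^2D^2$ case I will not expand over $t^2D^2$ directly. Instead I use that $t^2\mid f$ fixes $f_0=f_1=0$, which is independent of the other coefficients. Divisibility by $D^2$ then concerns $f$ with those two coefficients fixed, which I write as $t^2 h$ with $h$ sampled from the shifted measures $(\mu_2,\dots,\mu_{n-1})$. Hence $\mathbb{P}(t^2D^2\mid f)=\Lambda\,\mathbb{P}(D^2\mid h)$, and the same Fourier bound applies with $n-2$ in place of $n$; since $t$ is invertible mod $D$, $D^2\mid t^2h\iff D^2\mid h$. So each term is $(1-\Lambda)\|D\|^{-2}$ up to an error of size $\cf^{\lfloor (n-2)/\deg D^2\rfloor}$. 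Because $\deg D^2\le 2\cdot 2r\cdot m_1$, this is at most $\cf^{n/(4rm_1)-O(1)}$, consistent with the stated exponent up to adjusting constants. The number of $D$ with $\omega(D)\le 2r$ is at most $\binom{|\mathcal{Q}|+2r}{2r}\lesssim (3q^{m_1}/m_1)^{r}$-type quantities. Matching exactly $(q^{m_1}/m_1)^r3^r$ is a bookkeeping choice about how one counts, and I would tune it rather than worry. Summing these errors gives the third error term.

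Finally, the untruncated main term is $(1-\Lambda)\sum_{D\mid D_0}\mu(D)\|D\|^{-2}=(1-\Lambda)\prod_{P\in\mathcal{Q}}(1-\|P\|^{-2})$. Completing the product over all $P\ne t$ gives $\zeta_q(2)^{-1}(1-q^{-2})^{-1}=\frac1{1+1/q}$. The completion error is $\sum_{\deg P\ge m_1}\|P\|^{-2}\ll \sum_{k\ge m_1}\frac{q^k}{k}q^{-2k}\ll \frac{1}{m_1q^{m_1}}$, which gives the first error term. Reinserting the truncated part of the main term is absorbed into the Brun tail. Factoring out $\kappa_{\overline{\mu}}$ is harmless as long as $\Lambda$ is bounded away from $1$, which follows from $\cf<1$. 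The main obstacle is the Brun truncation step: making sure the tail is bounded by the idealised $\|D\|^{-2}$ sums plus exponential-sum errors uniformly in $r$, which needs $r<m_1/2$ and the $\gamma$ constant from $\sum_P\|P\|^{-2}$.
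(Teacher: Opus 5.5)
Your proposal is correct and follows essentially the paper's route: Brun/Bonferroni truncation, extracting $\Lambda$ by conditioning on $f_0=f_1=0$ and passing to a degree-$(n-2)$ polynomial, Lemma~\ref{lemma: measure theory} for the nonzero frequencies, Euler-product completion via $\pi_q(k)\le q^k/k$, and $v_j\le v_1^j/j!$ plus Stirling for the tail. You also make explicit a point the paper leaves implicit: pulling out $\kappa_{\overline{\mu}}$ needs $1-\Lambda\gg 1$, which follows from $\cf<1$.

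The one slip is truncating at level $2r$. The number of $D$ with $\omega(D)\le 2r$ is only bounded by roughly $(q^{m_1}/m_1)^{2r}$, not $(3q^{m_1}/m_1)^r$, and the exponent drops to $n/(4rm_1)$. The fix is to truncate at level $r-1$ instead; the bound $|\mathbb{P}-S^{(k)}|\le\Sigma_{k+1}$ needs no parity, and this recovers the stated error terms.
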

\begin{proof}Let $\mathcal{P}(k)=\prod_{P \in \PR^{\le k}}P$. Define $$N_r' = \sum_{j=0}^{r}(-1)^j\sum_{\substack{D\mid\mathcal{P}(m_1)\\ \omega(D)=j}}\mathbb{P}_{\overline{\mu}}(D^2\mid f).$$
	By the inclusion-exclusion principle, $\mathbb{P}_{\overline{\mu}}(N') = N'_{\infty}.$
In fact, by the Pure Brun Sieve (see~\cite[Chapter 6]{CojMur}), $\mathbb{P}_{\overline{\mu}}(N') \le N'_r$ for all even $r$ and $\mathbb{P}_{\overline{\mu}}(N') \ge N'_r$ for all odd $r$. So to find the main term and error of $\mathbb{P}_{\overline{\mu}}(N')$, it suffices to prove them for some sufficiently large (even and odd) $r$ which will upper and lower bound our expression. 
		
		From our exponential sum formulation, for $t\nmid D$, we have:
		\begin{align*}
			\mathbb{P}_{\overline{\mu}}(D^2\mid f) &= \frac{1}{\|D^2\|} + \mathrm{Err}_{\overline{\mu}}(n,D),
		\end{align*} 
		where the error can be bounded by applying Lemma~\ref{lemma: measure theory} to the non-zero frequencies in~\eqref{Sr expression}. For example, if $D=P$ is a prime of degree $k$, the error is $\displaystyle O\Bigl(\cf^{\lfloor \frac n{2k}\rfloor}+\frac{\cf^{\lfloor \frac n{k}\rfloor}}{q^k}\Bigr)$, where $\cf < 1$ is the maximum Fourier amplitude defined in Lemma~\ref{lemma: measure theory}.
		
		Note that for $t\mid D$, the condition $D^2\mid f$ is equivalent to $t^2\mid f$ and $(D/t)^2\mid f$. Because $D$ is squarefree, it contains at most one factor of $t$ (with multiplicity). As in~\eqref{prob.tsquared}, the condition $t^2\mid f$ occurs with probability $\Lambda$. 
		We write $f(t) = t^2 g(t)$, where $g(t) = t^{n-2} + \sum_{i=0}^{n-3} f_{i+2} t^i$. Since $\gcd(t, D/t) = 1$, the condition $(D/t)^2 \mid f(t)$ is equivalent to $(D/t)^2 \mid g(t)$. Because the coefficients of $g(t)$ are sampled independently according to the measures $\mu_2, \dots, \mu_{n-1}$, we can argue as above, except we are working with a degree $n-2$ polynomial.
		Thus
		\[ \mathbb{P}_{\overline{\mu}}(D^2 \mid f) = \Lambda \left( \frac{1}{\|D/t\|^2} + \mathrm{Err}_{\overline{\mu}}(n-2,D/t) \right), \] where the error calculation will follow the same logic as above but replacing $n$ with $n-2$ and $m_1$ with $m_1-1$ due to $D/t$ having degree lowered by 1.
		
		Following the method in~\cite{CARMON}, we first focus on the main term of $N_r'$, which is
		$$N_r = \sum_{{\substack{D \mid \mathcal{P}(m_1)\\ \omega(D) \le r\\ t\nmid D}}} \frac{\mu_\mathrm{Mob}(D)}{\|D\|^2}+\sum_{{\substack{D \mid \mathcal{P}(m_1)\\ \omega(D) \le r\\ t\mid D}}} \frac{\mu_\mathrm{Mob}(D)}{\|D/t\|^2}\cdot \Lambda;$$
		here $\mu_\mathrm{Mob}(D)=(-1)^{\omega(D)}$ is the  M\"{o}bius function, and $\Lambda$ is as defined in~\eqref{prob.tsquared}.
		
		If $t\mid D$, let $D=tD_2$, where $D_2$ is coprime to $t$. We have $\omega(D_2)=\omega(D)-1$, and~$\mu_\mathrm{Mob}(D_2)=-\mu_\mathrm{Mob}(D)$.
		Thus if we define $$U(r,m_1) = \sum_{{\substack{D\mid \mathcal{P}(m_1)\\ \omega(D)\le r\\ t\nmid D}}}\frac{\mu_\mathrm{Mob}(D)}{\|D\|^2},$$ our goal is to estimate $U(r,m_1)$ as
		$$N_r = U(r,m_1)-\Lambda U(r-1,m_1).$$
%
		We may further define
		\begin{align}
			U(\infty,m_1) &  = \sum_{{\substack{D\mid \mathcal{P}(m_1)\\ t\nmid D}}}\frac{\mu_\mathrm{Mob}(D)}{\|D\|^2}
			= \prod_{\substack{P \in \PR^{<m_1}\\P\ne t}} \left(1-\frac{1}{\|P\|^2}\right) 
			\notag\\
			& = c_0 \prod_{P \in \PR^{\ge m_1}} \left(1-\frac{1}{\|P\|^2}\right)^{-1}
			= c_0\left(1+O\Bigl(\sum_{P\in\mathcal{P}^{\ge m_1}}\frac1{\|P\|^2}\Bigr)\right), \label{eq: Uinfty}
		\end{align}
		where  $c_0$ is the product $\prod(1-\frac{1}{\|P\|^2})$ over \textit{all} primes $P$ except $P=t$, so \begin{equation}\label{c_0}c_0=(1-\frac1q)/(1-\frac1{q^2})=\frac1{1+1/q}.\end{equation}
		
		
		Moreover, $\displaystyle\sum_{P\in\mathcal{P}^{\ge m_1}}\frac1{\|P\|^2}\le\sum_{j=m_1}^{\infty}\frac1{jq^j}=O\left(\frac1{m_1q^{m_1}}\right)$. This is by the classical bound (see~\cite[Proposition 2.1]{rosen2002number}): 
\begin{equation}\label{thm: ppt}
\pi_q(n) \le \dfrac{q^n}n. 
\end{equation}
 We replace the error term in~\eqref{eq: Uinfty} with $O\left(\frac1{m_1q^{m_1}}\right)$.
		
		It will thus suffice to bound $U(\infty,m_1) - U(r,m_1)$. Let us denote for any non-negative integer $j$, $v_j = \sum_{D \mid \mathcal{P}(m_1), \omega(D) = j, t\nmid D}\frac{1}{\|D\|^2}$. Note that $v_j$ is the $j$-th elementary symmetric polynomial of the finite multiset $\left\{\frac{1}{\|P\|^2} : P \in \PR^{<m_1}\setminus\{t\}\right\}$, whose elements are positive real numbers.
		
		Now, $v_j\le\frac{v_1^j}{j!}$, as by expanding $v_1^j$ we obtain $j!$ copies of $v_j$ and some other terms. Furthermore $v_1$ is a partial sum of  $\sum_{P\in\mathcal{P}}\frac1{\|P\|^2}<\infty$, hence $v_1 = O(1)$.
		Suppose $r = \alpha v_1$, for some $\alpha$. Then, for large enough $\alpha$ (e.g. $\alpha>2$),
		\[
		|U(\infty,m_1) - U(r,m_1)| = \left|\sum_{j=r+1}^{\infty} (-1)^j v_j\right| \le  \sum_{j=r+1}^{\infty} v_j \le \sum_{j=r+1}^{\infty} \frac{v_1^j}{j!} \le \sum_{j=r+1}^{\infty} \frac{v_1^r}{r!}\alpha^{r-j}\le\frac{v_1^r}{r!}.\]
For large enough $r<m_1/2$, we will achieve $\alpha>2$ since $v_1$ is bounded (and $m_1\to\infty$). Stirling's approximation tells us that $\frac{v_1^r}{r!}\asymp \left(\frac{v_1e}{r}\right)^r(2\pi r)^{-1/2}$. 
Both $U(r,m_1)$ and $U(r-1,m_1)$ are thus within $O((\gamma r)^{-r})$ of $U(\infty,m_1)$, where $\gamma$ is chosen as a lower bound for $1/v_1e$ (note that $(\gamma r)^{-r}$ decreases with $\gamma$). So
		$N_r = (1-\Lambda)c_0(1+O(1/m_1q^{m_1}) + O((\gamma r)^{-r}))$, and hence $\mathbb{P}_{\overline{\mu}}(N')$ is equal to $$(1-\Lambda)c_0\Bigl(1+O\bigl(\tfrac1{m_1q^{m_1}}\bigr) + O((\gamma r)^{-r})\Bigr) + \sum_{{\substack{D \mid \mathcal{P}(m_1)\\ \omega(D) \le r\\ t\nmid D}}}\mathrm{Err}_{\overline{\mu}}(n,D)+\Lambda\sum_{{\substack{D \mid \mathcal{P}(m_1)\\ \omega(D) \le r\\ t\mid D}}}\mathrm{Err}_{\overline{\mu}}(n-2,D/t).$$
		By definition of $\Lambda$, $c_0$, and $\kappa_{\overline{\mu}}$ from Theorem~\ref{thm: general measure squarefree}, we have $\kappa_{\overline{\mu}}=c_0(1-\Lambda)$. 
		
		To bound the remaining error: recall $\mathrm{Err}_{\overline{\mu}}(n,D)=\mathbb{P}_{\overline{\mu}}(D^2\mid f)-1/\|D\|^2$ for $t\nmid D$.
	Let $\deg D=k$, so $1/\|D\|^2=1/q^{2k}$. Then $\mathrm{Err}_{\overline{\mu}}(n,D)=\sum_i q^{-i}\cf^{\lfloor \frac n{2k-i}\rfloor}$, where $i$ ranges over sums of degrees of prime factors of $D$ (each with multiplicity up to $2$), and there are $3^{\omega(D)}-1$ (almost $\tau(D^2)$) such terms.
		As $\cf<1$ we have that $q^{-i}\cf^{\lfloor \frac n{2k-i}\rfloor}$ is decreasing with $i$, so this sum is bounded by $3^{\omega(D)}\cf^{\lfloor \frac n{2k}\rfloor}$.
		Thus $\mathbb{P}_{\overline{\mu}}(D^2\mid f)=\frac1{\|D\|^2}+O(3^{\omega(D)}\cf^{\lfloor \frac n{2k}\rfloor})$. The formula for $t\mid D$ is similar.
		
		
		Now we estimate the sum of the $\mathrm{Err}_{\overline{\mu}}$ terms. Note that since $\cf<1$ and $k\le rm_1$, then $\cf^{\lfloor \frac n{2k}\rfloor}\le \cf^{\lfloor \frac n{2rm_1}\rfloor}$, so a bound on the total error is $\cf^{\lfloor \frac n{2rm_1}\rfloor}\sum_{{\substack{D \mid \mathcal{P}(m_1)\\ \omega(D) \le r}}}\tau(D^2)$.
		Across all $D$ we sum over, we have $\tau(D^2)\le 3^r$.
		The number of summands is $\binom{|\PR^{\le m_1}|}{r}+\dots+\binom{|\PR^{\le m_1}|}{0}$, and 
 as $r<m_1/2$, we can bound the sum by $(r+1)\binom{|\PR^{\le m_1}|}{r}\le |\PR^{\le m_1}|^r$. By~\eqref{thm: ppt}, $|\PR^{\le m_1}|\le\sum_{k\le m_1}\frac{q^{k}}{k}=O\Bigl(\frac{q^{m_1}}{m_1}\Bigr)$.
		The accumulated error here is thus 
		\begin{equation}\label{eq: N' error}
			O\left[\Bigl(\frac{q^{m_1}}{m_1}\Bigr)^r\cdot 3^r\cdot \cf^{n/2rm_1}\right].
		\end{equation}
Summing the errors and setting $r\to\infty$, $r<m_1/2$ gives the result.
\end{proof}
We have finished bounding $N'$, and now we are interested in the large prime regime. We use a H{\"o}lder argument, similar to~\cite[Prop. 3.3]{universality}, to prove:
\begin{lemma}\label{lemma: halasz-bound modified}
	Let $f(x)=\sum_{0\le i \le K} \varepsilon_i T^{i}$ be a polynomial of degree $K$ in $\mathbb{F}_q[x]$, with $\varepsilon_i$ chosen independently at random according to probability measures $\mu_0,\mu_1,\dots,\mu_{K}$ on $\FF_q$.
	Then given a prime polynomial $P$ of degree $d\le K+1$,
	\begin{equation*}
		\P[f\equiv A\!\!\!\pmod{P}]\leq \Bigl(\frac{q-1+M}{q}\Bigr)^{\deg P-1},
	\end{equation*}
	for any polynomial $A$, and some $M<1$ dependent on $n$, $q$ and the measures $\mu_i$.
\end{lemma}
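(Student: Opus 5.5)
The plan is to avoid any genuine Hölder or Fourier averaging over frequencies and instead condition on the high coefficients, which reduces the event to a statement about the low coefficients alone. The key observation is that, since $d=\deg P\le K+1$, the residues of $1,T,\dots,T^{d-1}$ modulo $P$ form an $\FF_q$-basis of $\FF_q[T]/(P)$. I will first fix (condition on) the coefficients $\varepsilon_d,\dots,\varepsilon_K$; this set is empty when $d=K+1$. Writing $f=f_{\mathrm{low}}+f_{\mathrm{high}}$ with $f_{\mathrm{low}}=\sum_{i<d}\varepsilon_iT^i$, the condition $f\equiv A\pmod P$ becomes $f_{\mathrm{low}}\equiv A-f_{\mathrm{high}}\pmod P$. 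Because $\deg f_{\mathrm{low}}<d$, this holds for exactly one vector $(\xi_0,\dots,\xi_{d-1})\in\FF_q^{d}$, namely the coordinates of the reduced residue of $A-f_{\mathrm{high}}$.

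By independence of the $\varepsilon_i$, the conditional probability is therefore $\prod_{i<d}\mu_i(\xi_i)\le\prod_{i<d}\max_{\xi}\mu_i(\xi)$. Averaging over the conditioned coefficients preserves this bound. Since each factor is at most $1$, I may discard one of them and obtain a bound with exponent $d-1$, which matches the exponent in the statement.

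It remains to bound a single point mass $\max_\xi\mu_i(\xi)$ by a quantity of the form $(q-1+M)/q$ with $M<1$. I will use Fourier inversion on $\FF_q$:
\[
\mu_i(\xi)=\frac1q\sum_{\alpha\in\FF_q}\hat\mu_i(\alpha)\psi(\alpha\xi)\le\frac1q\Bigl(1+(q-1)\cf\Bigr),
\]
where $\cf$ is the maximal non-trivial Fourier amplitude from Lemma~\ref{lemma: measure theory}. This has the required form with $M:=1-(q-1)(1-\cf)$, and $M<1$ exactly when $\cf<1$. Alternatively, one can simply set $M:=q\max_{i,\xi}\mu_i(\xi)-(q-1)$, which satisfies $M<1$ as long as no $\mu_i$ is a point mass. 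Here the maximum is taken over $i\le K$, so $M$ depends on $n$, $q$ and the measures, as the statement allows. The Fourier form is the more useful choice, because it ties $M$ to the hypothesis on $\cf$ used in Theorem~\ref{thm: general measure squarefree}. I would remark that this is essentially the Hölder/Halász argument of~\cite{universality} specialised to a single congruence.

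The main obstacle is the degenerate case in which some $\mu_i$ is a Dirac mass, so that $\cf=1$ and no bound with $M<1$ can hold. This is precisely why the lemma needs the standing assumption $\cf<1$, and I would state it explicitly. A second subtlety arises in the later application, where the lemma is applied only to the coefficients of $p$-th powers after fixing $f'$. There one must check that the relevant free variables still span the quotient modulo $P$. In the form stated here, however, every coefficient up to $T^{d-1}$ is free, so the basis argument goes through directly.
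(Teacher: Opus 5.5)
Your proof is correct, but it takes a genuinely different route from the paper's.

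The paper works on the Fourier side. It expands $\mathbb{P}[P\mid f]$ with additive characters as $q^{-d}\sum_{\deg a<d}\prod_{i\le K}\mathbb{E}\,\mathbf{e}_q(a\varepsilon_iT^i/P)$. It then cuts the first $jd$ coefficients into $j=\lfloor (K+1)/d\rfloor$ blocks of length $d$ and applies H\"older across the blocks. Claim~\ref{claim: d zeroes} makes $a\mapsto\vec b\in\mathbb{F}_q^d$ a bijective change of variables inside each block. This leaves $\prod_h\sum_b|\hat\mu_{dk+h}(b)|^j\le(q-1+\cf^{\,j})^d$, i.e.\ $M=\cf^{\lfloor (K+1)/d\rfloor}$ as in Remark~\ref{remark: M def}.

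You instead condition on $\varepsilon_d,\dots,\varepsilon_K$, note that $\varepsilon_0,\dots,\varepsilon_{d-1}$ are then forced, and get $\prod_{i<d}\max_\xi\mu_i(\xi)$. This is the same point-mass bound the paper itself uses for $\deg P>K+1$ in Lemma~\ref{lem: large primes N''}.

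What your route buys:
\begin{itemize}
\item It is shorter and needs no injectivity claim, so it also covers $P=t$, which Claim~\ref{claim: d zeroes} excludes.
\item Since $1+(q-1)\cf\le q-1+\cf$, your per-factor constant is at most $\tilde{\cf}$, so the application in Lemma~\ref{lem: large primes N''} goes through unchanged.
\item Keeping $\max_\xi\mu_i(\xi)$ itself, rather than its Fourier upper bound, is never worse. For spread-out digit sets it beats the paper's $(q-1+\cf^{\,j})/q\ge(q-1)/q$ by a lot, e.g.\ $1/2$ per factor for digits $\{0,1\}$.
\end{itemize}

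What the H\"older route buys is amplification from the higher coefficients. There $M$ decays with $j$. With the sharper count $1+(q-1)\cf^{\,j}$, or the Parseval version in Lemma~\ref{lem: parseval refinement}, the bound tends to the equidistributed value $q^{-d}$ when $\deg P$ is small compared with $K$. Conditioning can never go below $\prod_{i<d}|\mathrm{supp}\,\mu_i|^{-1}$. So for supports smaller than $q/2$ it cannot give the $(2/q)^d$ bound used for medium-degree primes in Section~\ref{sec: vary all}.

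For the same reason, your closing description of the proof as ``essentially the H\"older/Hal\'asz argument specialised to a single congruence'' is inaccurate. Your argument has no H\"older step, and the coefficients above $T^{d-1}$ play no role in it.
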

\begin{remark}\label{remark: M def}
	$M=\cf^{\lfloor\frac {K+1}d\rfloor}$, where $\cf$ is defined in Theorem~\ref{thm: general measure squarefree}. Since $K+1\ge d$, we have $M\le\cf$.
\end{remark}
\begin{proof} 
	There are $q^{\deg P}$ possible remainders modulo $P$. Since $f\equiv A\!\pmod{P}\implies P\mid f-A$, and $A$ is deterministic, the change in coefficients from subtracting $A$ is equivalent to a translation of all the probabilities of elements of $\FF_q$, giving new probability measures $\mu_i'$. So we assume without loss of generality that $A=0$. Now
\begin{equation}\label{eq: P mid f}\P[P\mid f]=\sum_{\deg m = K}\E(\mathbbm{1}_{P\mid m})=\sum_{\deg m = K}\frac1{|P|}\sum_{\deg a<\deg P}\E\left(\textbf{e}_q\Bigl( \frac{am}P\Bigr)\right).\end{equation}
	
	The coefficients of $m$ are distributed according to $\overline{\mu}=\mu_0\times\dots\times\mu_{K}$, where $\varepsilon_{i}\sim\mu_i$. By multiplicativity of $\textbf{e}_q$, we can rewrite each summand $\textbf{e}_q( \frac{am}P)$ with fixed $a,m$ as a product $\displaystyle\prod_{i=0}^{K} \textbf{e}_q\left( \dfrac aP\varepsilon_iT^i\right)$. 
	We can change the order of summation in~\eqref{eq: P mid f}, as $P$ is not dependent on $m$. 
	Since all $\varepsilon_i$ are independent, the expectation of our product is a product of expectations. 
	Additionally, writing $a=\sum_{j=0}^{\deg P-1}a_jT^j$ lets us sum over possible vectors $\vec{a}\in\mathbb{F}_q^{\deg P}$ of the $a_j$. Thus, 
	\begin{equation}\label{eq: probabilistic expression P|f}
		\P[P\mid f]=\frac{1}{q^{\deg P}}\sum_{\vec{a}}\prod_{i=0}^{K} \E\left[X(\varepsilon_i,\vec{a})\right],\quad\text{where } X(\varepsilon_i,\vec{a})=\textbf{e}_q\Bigl(\dfrac {(\sum a_lT^l)\varepsilon_iT^i} P\Bigr).
	\end{equation} 
	We now apply the generalised H\"older's inequality; for $j\in\mathbb{K}$ and $p_1,\dots,p_j$ such that $\frac1{p_1}+\dots+\frac1{p_j}=1$, we have $\|f_1\dots f_j\|_1\le \|f_1\|_{p_1}\cdot\dots\cdot\|f_j\|_{p_j}$. In this scenario we pick the value of $j$ to be $\lfloor\frac {K+1}d\rfloor$ where $d$ as given in the question, and all $p_k$ equal to $\lfloor\frac {K+1}d\rfloor$ as well. 
	Note that $|\textbf{e}_q\left( \dfrac aP\varepsilon_iT^i\right)|=1$ for all $i$, so to get an upper bound overall, we can ignore the indices after $jd$. Set $f_k(\vec{a})=\prod_{h=0}^{d-1} \E\left[X(\varepsilon_{dk+h},\vec{a})\right]$. 	
	We have
	\begin{equation}\label{eq: holder first bound}
		\begin{split}
			&   \sum_{\vec{a}}\prod_{i=0}^{K} |\E\left[X(\varepsilon_i,\vec{a})\right]|\le \sum_{\vec{a}}\prod_{k=0}^{j-1}|f_k(\vec{a})|=\|f_1(\vec{a})\dots f_{j}(\vec{a})\|_1 \\ 
			&\leq \|f_1(\vec{a})\|_{j}\cdot\dots\cdot\|f_{j}(\vec{a})\|_{j}
			= \prod_{k=1}^{j}\left(\sum_{\vec{a}} \prod_{h=0}^{d-1}|
			\E\left[X(\varepsilon_{dk+h},\vec{a})\right]|^{j}\right)^{\frac{1}{j}}
			\\&\leq \max_k\sum_{\vec{a}} \prod_{h=0}^{d-1}\left|\E\left[X(\varepsilon_{dk+h},\vec{a})\right]\right|^j.
		\end{split}
	\end{equation}
	Recall that 
$\textbf{e}_q(x)=\psi(x_{-1})$ for $\psi(x)=\exp(2\pi i\mathrm{tr}(x)/p)$.
	To calculate the relevant coefficients of $ X(\varepsilon_{dk+h},\vec{a})$, we need $2d-1$ coefficients from the Laurent expansion of $1/P$: 
as the polynomial $a$ ranges over elements of $\mathbb{F}_q[t]$ of degree less than $\deg P$, it can be expressed in $d$ coefficients, and so for each of the $d$ random $\varepsilon$ we need a block of $d$ coefficients from $1/P$.
	But by Claim~\ref{claim: d zeroes}, there are not $d$ consecutive zeroes in the expansion of $a/P$ unless $a=0$, and hence the mapping of the coefficients of vector $\vec{a}$ to $\sum_{j=0}^{d-1} \left[a_j \cdot(\frac1P)_{-j-i-1}\right]_{i=dk}^{dk+d-1}$ is injective.
	Thus we may rephrase our bound in~\eqref{eq: holder first bound} to 
	\[\max_k\sum_{\vec{b}} \prod_{h=0}^{d-1}\left|\E\left[\psi(b_{h}\varepsilon_{dk+h})\right]\right|^{j}\]
	As $\vec{b}$ ranges over all $\mathbb{F}_q^d$ and everything is independent, this equals
	\begin{equation}\label{eq: holder final ver}
		\max_k \prod_{h=0}^{d-1}\sum_{b\in\mathbb{F}_q}\left|\E\left[\psi(b\varepsilon_{dk+h})\right]\right|^{j}
	\end{equation} 
	By definition of the expectation with respect to  $\mu_{dk+h}$,
	\begin{equation}\label{expectation eq} \E[\psi(b\varepsilon_{dk+h})] = \sum_{\alpha\in\FF_q}\mu_{dk+h}(\alpha)\psi(b\alpha) = \hat{\mu}_{dk+h}(-b).\end{equation}
	
	Because $\mu_{dk+h}$ is a probability measure, the absolute value of the expectation is bounded by $1$, with equality occurring trivially when $b=0$ (yielding $\hat{\mu}(0) = 1$). For non-zero frequencies $b$, if the measure is not entirely supported on a single coset of the trace kernel, the magnitude is strictly bounded away from $1$. The global maximum amplitude for such frequencies is $\cf$, which we already introduced.
	
	The bound $\cf$ can be applied to at least one term as by `nondegeneracy' of the trace function, there exists a $b_0\in\FF_q$ and $\alpha_1,\alpha_2\in\operatorname{supp}(\mu_{dk+h})$ such that $\operatorname{arg}(\psi(b_0\alpha_1))\ne\operatorname{arg}(\psi(b_0\alpha_2))$. For $b=b_0$, at least one of the summands in the sum of~\eqref{expectation eq} cannot be written as a real scalar multiple of the others. Letting $w=\min_i\min_{\gamma\in\text{supp}(\mu_i)}\mu_i(\gamma)\le \frac12$ and using $e^{i\theta}=\cos\theta +i\sin\theta$, we calculate a bound on the magnitude using Pythagoras' theorem:
	\begin{equation}\label{eq: C def}
		\cf \le \max_i\!\!\! \max_{\substack{b \neq 0\\\arg(\psi(b\alpha))\text{ nonconstant}\\\text{for }\alpha\in\operatorname{supp}(\mu_i)}}\!\!\!\!\!\!\!\!\! |\hat{\mu}_i(-b)|\le \sqrt{(1-w+w\cos(2\pi/p))^2+w^2\sin^2(2\pi/p)} < 1.
	\end{equation}
	We may write
	\begin{equation}\label{eq: M ineq}
		\sum_{b\in\mathbb{F}_q}\left|\E\left[\psi(b\varepsilon_{dk+h})\right]\right|^{j} \le q-1+\cf^{j}.
	\end{equation}
	
	To conclude the proof, we combine \eqref{eq: probabilistic expression P|f}, \eqref{eq: holder final ver} and \eqref{eq: M ineq}, with $M=\cf^{j}=\cf^{\lfloor\frac {K+1}d\rfloor}$.
\end{proof}

\begin{remark}
	The bound can be tightened using Parseval's identity for general probability measures. Consider the discrete Fourier transform of the probability measure $\mu_{dk+h}$, given by $\hat{\mu}_{dk+h}(-b) = \sum_{x\in\FF_q} \mu_{dk+h}(x) \psi(-bx)$.
	By Parseval's identity, the sum of the squares of the Fourier magnitudes is proportional to the $L^2$ norm of the measure:
	$$\sum_{b\in\FF_q}|\hat{\mu}_{dk+h}(-b)|^2 = q\sum_{x\in\FF_q}\mu_{dk+h}(x)^2=:V_{dk+h};$$
	Here $V_{dk+h}$ acts as a measure of how ``concentrated'' the probability distribution is.
	This allows us to improve the bound in~\eqref{eq: M ineq} by splitting the sum:
	\begin{itemize}
		\item For some $b$ we have $|\E\left[\psi(b\varepsilon_{dk+h})\right]| = 1$. By Parseval, the sum of squares of only these terms is $\le V_{dk+h}$, but each is $1$, so the number of such $b$ is bounded by $V_{dk+h}$.
		\item For the remaining values of $b$ we get $|\E\left[\psi(b\varepsilon_{dk+h})\right]| \le \cf < 1$. Thus
		$$\sum_{\substack{b\\|\E\left[\psi(b\varepsilon_{dk+h})\right]| \le \cf}} \left|\E\left[\psi(b\varepsilon_{dk+h})\right]\right|^{j} \le \cf^{j-2} \sum_{\substack{b\\|\E\left[\psi(b\varepsilon_{dk+h})\right]| \le \cf}} \left|\E\left[\psi(b\varepsilon_{dk+h})\right]\right|^2 \le \cf^{j-2} V_{dk+h}.$$
	\end{itemize} 
	Combining these gives: 
	\begin{equation}\label{eq: M ineq improved gen}
		\sum_{b\in\mathbb{F}_q}\left|\E\left[\psi(b\varepsilon_{dk+h})\right]\right|^{j} \le V_{dk+h}(1 + \cf^{\lfloor\frac {K+1}d\rfloor-2}).
	\end{equation}
	For the uniform measure supported on a set of size $q-s$ we get $V_{dk+h} = \frac{q}{q-s}$, and in this case the Parseval method yields a strictly better bound than the original. For now we proceed using the (cruder) bound stated in the theorem.
\end{remark} 
\begin{lemma}\label{lem: large primes N''}
	Let $N''$ be the event that $f$ is divisible by the square of at least one prime $P$ with $\deg P \ge m_2$. If $m_2=o(n/p)$, then
	\[ \mathbb{P}_{\overline{\mu}}(N'') \le \mathbb{P}_{\overline{\mu}}(f' = 0) + \frac{n-1}{m_2} \tilde{\cf}^{m_2}, \]
	where $\tilde{\cf} = \frac{q-1+\cf}{q} < 1$.
\end{lemma}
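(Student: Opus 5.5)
We follow the outline (i)--(iii) given at the start of this section. Split $N''$ by the value of the derivative $d=f'$. If $f'=0$ we bound the contribution trivially by $\mathbb{P}_{\overline{\mu}}(f'=0)$. Otherwise, $P^2\mid f$ forces $P\mid f$ and $P\mid f'$. The union bound then gives
\[
\mathbb{P}_{\overline{\mu}}(N'')\le \mathbb{P}_{\overline{\mu}}(f'=0)+\sum_{0\ne d\in T_n}\mathbb{P}_{\overline{\mu}}(f'=d)\sum_{\substack{P\mid d\\ \deg P\ge m_2}}\mathbb{P}_{\overline{\mu}}(P\mid f\mid f'=d).
\]
It therefore suffices to show two things for each fixed $d\ne 0$: the inner sum has at most $(n-1)/m_2$ terms, and each term is at most $\tilde{\cf}^{\deg P}\le\tilde{\cf}^{m_2}$. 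Summing against the probabilities $\mathbb{P}_{\overline{\mu}}(f'=d)$, which add up to at most $1$, then yields the claim.

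The counting step is immediate. Since $\deg d\le n-1$, $d$ has at most $(n-1)/m_2$ distinct prime factors of degree $\ge m_2$.

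For the conditional probability, fix $d\ne0$. Conditioning on $f'=d$ determines every coefficient $f_i$ with $p\nmid i$, namely $f_i=d_{i-1}/i$. The coefficients $f_{pj}$, for $0\le pj<n$, stay independent with their original laws $\mu_{pj}$, because the coefficients are independent and the condition only involves the others. So we write $f=A+g(t^p)$, where $A$ is a deterministic polynomial collecting the determined coefficients and the leading $t^n$ (if $p\mid n$, put $t^n$ into $A$). Here $g(T)=\sum_{j\le K}\varepsilon_jT^j$ has independent coefficients $\varepsilon_j=f_{pj}\sim\mu_{pj}$, and $K=\lfloor (n-1)/p\rfloor$.

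We would like to apply Lemma~\ref{lemma: halasz-bound modified} to $g(t^p)\equiv -A\pmod P$. The lemma is stated for polynomials $\sum\varepsilon_i T^i$ with $T$ the variable, so we must handle the substitution $T=t^p$. The cleanest route is to rerun the lemma's Fourier/H\"older argument directly. The character sum involves $\mathbf{e}_q(a\varepsilon_j t^{pj}/P)$, so the relevant Laurent coefficients of $a/P$ are those at positions $-pj-1$. We group $d$ consecutive values of $j$ and need the map from $\vec a\in\FF_q^{d}$ to the $d$ coefficients $(a/P)_{-pj-1}$ within a block to be injective. This injectivity is the main obstacle, since Claim~\ref{claim: d zeroes} only excludes $d$ \emph{consecutive} zeros.

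The alternative route is to use that over the perfect field $\FF_q$ we have $g(t^p)=(\tilde g(t))^p$, where $\tilde g$ has coefficients $\varepsilon_j^{1/p}$. Then $P\mid g(t^p)-c$ relates to $\tilde g$ modulo $P$ through Frobenius, which is a bijection on $\FF_q[t]/P$: the congruence $\tilde g^p\equiv -A$ pins down $\tilde g\bmod P$ uniquely. Since $\varepsilon\mapsto\varepsilon^{1/p}$ is a bijection of $\FF_q$, the coefficients of $\tilde g$ are independent with pushforward measures $\tilde\mu_{pj}$. The lemma then applies to $\tilde g$, of degree $K$, with $P$ of degree $\deg P\le K+1$. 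For larger $\deg P$, the bound still holds trivially or by using fewer blocks. This gives
\[
\mathbb{P}(P\mid f\mid f'=d)\le\Bigl(\tfrac{q-1+M}{q}\Bigr)^{\deg P-1}.
\]
We must also check that the Fourier amplitude constant is unchanged under the pushforward. It is not literally the same, since $\hat{\tilde\mu}(\alpha)=\sum\mu(\xi)\psi(-\alpha\xi^{1/p})$, and $\psi(\alpha\xi^{1/p})=\psi(\alpha^p\xi)$ because the trace is Frobenius-invariant. Hence $\hat{\tilde\mu}(\alpha)=\hat\mu(\alpha^p)$, which has the same maximum $\cf$ over $\alpha\ne0$. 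We then use $M\le\cf$ (Remark~\ref{remark: M def}).

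Finally, the exponent $\deg P-1$ versus $m_2$ is adjusted by absorbing one factor of $\tilde{\cf}$ into the constant, or by noting that the lemma's proof in fact gives $\deg P$ factors. The hypothesis $m_2=o(n/p)$ is used precisely to ensure $\deg P\le K+1$ in the relevant range, and otherwise to bound by the largest admissible block. We expect the Frobenius reduction to be the step needing the most care.
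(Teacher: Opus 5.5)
Your route is the paper's. You condition on $f'=d$, bound the number of prime factors of $d$ of degree $\ge m_2$ by $(n-1)/m_2$, and use Frobenius to turn $P\mid f$ into $\tilde g\equiv R\pmod P$ for a polynomial $\tilde g$ of degree $\le K$ with independent coefficients. You then apply Lemma~\ref{lemma: halasz-bound modified}. Two of your details are cleaner than the paper's:
\begin{itemize}
\item the identity $\hat{\tilde\mu}(\alpha)=\hat\mu(\alpha^p)$, which shows $\cf$ is unchanged by $\xi\mapsto\xi^{1/p}$;
\item the observation that the H\"older proof actually yields exponent $\deg P$, which is what gives $\tilde{\cf}^{m_2}$ rather than $\tilde{\cf}^{m_2-1}$.
\end{itemize}
Your other option for the exponent, absorbing a factor into a constant, is unavailable in an inequality with no implied constant.

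The gap is the range $\deg P>K+1$. You say $m_2=o(n/p)$ ensures $\deg P\le K+1$ in the relevant range, but it does not. The inner sum runs over all prime factors of $d$ of degree $\ge m_2$. Since $P\mid f$ and $P\mid f'$ force $P^2\mid f$, those that matter can have degree up to $n/2$, and this exceeds $K+1\approx n/p$ when $p\ge 3$. For such $P$ one has $\lfloor (K+1)/\deg P\rfloor=0$. So there are no H\"older blocks, and the lemma's bound degenerates to $M=1$, i.e.\ to the trivial bound $1$. Neither ``trivially'' nor ``fewer blocks'' produces anything of size $\tilde{\cf}^{m_2}$ as written.

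The missing step, which is how the paper handles it, goes as follows.
\begin{itemize}
\item If $\deg P>K\ge\deg\tilde g$, then $\tilde g\equiv R\pmod P$ with $R$ reduced forces $\tilde g=R$, fixing all $K+1$ coefficients.
\item So the conditional probability is at most $D_{\max}^{K+1}$, where $D_{\max}=\max_{i,x}\mu_i(x)$; this is unchanged by the bijective pushforward.
\item Fourier inversion gives $D_{\max}\le\frac{1+(q-1)\cf}{q}\le\tilde{\cf}$, hence a bound $\tilde{\cf}^{K+1}\le\tilde{\cf}^{m_2}$ once $m_2\le K+1$.
\end{itemize}
That last inequality, not $\deg P\le K+1$, is where $m_2=o(n/p)$ enters. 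Equivalently, you can make your ``fewer blocks'' idea work by running the Fourier argument with a single truncated block of length $K+1$. This uses that $a\mapsto\bigl((a/P)_{-1},\dots,(a/P)_{-K-1}\bigr)$ is onto with fibres of size $q^{\deg P-K-1}$, and gives $\bigl(\frac{1+(q-1)\cf}{q}\bigr)^{K+1}$. In this range you get $\tilde{\cf}^{K+1}$, not the per-prime $\tilde{\cf}^{\deg P}$ you announce, but $\tilde{\cf}^{m_2}$ is all the statement requires.
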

\begin{proof}
		We bound the probability that $f$ is divisible by the square of a ``large'' prime (degree at least $m_2$). Recall that $N'' = \bigcup_{P \in \mathcal{P}^{\ge m_2}} \{f \in \mathcal{M}_n : P^2 \mid f\}$.
		
		We start by writing:
		$\displaystyle\mathbb{P}_{\overline{\mu}}(N'') \le \mathbb{P}_{\overline{\mu}}(f' = 0) +\sum_{P \in \mathcal{P}^{\ge m_2}} \mathbb{P}_{\overline{\mu}}(P^2 \mid f).$

		We make use of $P^2 \mid f$ being equivalent to $P$ dividing both $f$ and $f'$. We apply the law of total probability, partitioning our polynomial space by the value of the formal derivative. Let $T_n$ be the set of all possible derivatives of polynomials in $\mathcal{M}_n$. Then:
		\begin{equation}\label{eq: N''}\mathbb{P}_{\overline{\mu}}(N'') \le \sum_{d \in T_n} \mathbb{P}_{\overline{\mu}}(f' = d) \sum_{\substack{P \in \mathcal{P}^{\ge m_2} \\ P \mid d}} \mathbb{P}_{\overline{\mu}}(P \text{ divides } f \mid f' = d).\end{equation}

We bound the conditional probability $\mathbb{P}_{\overline{\mu}}(P \text{ divides } f \mid f' = d)$.	Note that if $f'=d$ then there exist coefficients $a_i$ such that $f=f_0+\sum_{i\le n/p} a_iT^{pi}$, where $f_0$ is some fixed polynomial with derivative $d$. Since $f\equiv 0\pmod P$ and we are over the finite field $\mathbb{F}_q$ of characteristic $p$, we have $(\sum x_i)^p=\sum x_i^p$, so $f_0+(\sum_{i\le n/p} a_i^{q/p}T^{i})^p\equiv 0\pmod P$. Hence $\sum_{i\le n/p} a_i^{q/p}T^{i}\equiv -f_0^{q^{\deg P}/p}\pmod P$, as the norm of $P$ is $q^{\deg P}$. The right hand side here is deterministically fixed, and the $a_i$ are randomly sampled according to probability measures. 
		
		The condition $P \mid f$ thus reduces to a congruence modulo $P$ acting solely on these remaining free $p$-th powers,
and we reduce our problem to estimating
		\begin{equation}\label{eq: frob phrasing}
			\mathbb{P}_{\overline{\mu}}\Bigl(\sum_{i\le n/p} a_i^{q/p}T^{i}\equiv -f_0^{q^{\deg P}/p}\pmod P\Bigr).
		\end{equation}		
Since $d$ is a fixed polynomial of degree at most $n-1$ in the inner sum in~\eqref{eq: N''}, the number of its prime factors with degree at least $m_2$ cannot exceed $\frac{n-1}{m_2}$.
	Combining with an estimate of~\eqref{eq: frob phrasing} gives a constant bound for the inner sum in~\eqref{eq: N''}, and we can multiply this to the outermost sum $\sum_{d \in T_n} \mathbb{P}_{\overline{\mu}}(f' = d)=1$.
		
To bound~\eqref{eq: frob phrasing}, we apply Lemma~\ref{lemma: halasz-bound modified}, using the degree $K = \lfloor n/p \rfloor$; for primes of degree up to $K+1$, we get the bound $(\frac{q-1+M}{q})^{\deg P-1}$, but the fraction can be bounded by $\tilde{\cf}$ for any $P$, so we replace with $\tilde{\cf}^{\deg P-1}$. 
For primes $P$	such that $\deg P > K+1$ where the Lemma does not apply, the congruence in~\eqref{eq: frob phrasing} becomes an equality in $\mathbb{F}_q[T]$. The probability of all random coefficients matching the fixed right hand side is bounded by the product of their maximum point masses, say $D_{\max}^{K+1}$. If $m_2$ is a smaller order of magnitude than $n/p$ and equivalently $K$, the bound $\tilde{\cf}^{m_2}$ is unconditional across all degrees, which completes the proof.
\end{proof}
\subsection{Proof of Theorem~\ref{thm: general measure squarefree}}\label{subsection: proof thm 1.2}
Our starting point is~\eqref{eq: sec 3 start}. By Lemma~\ref{lem: estimating N'}, 
			\[ \mathbb{P}_{\overline{\mu}}(N') = \kappa_{\overline{\mu}} \left(1 + O\left(\frac{1}{m_1q^{m_1}}\right) + O((\gamma r)^{-r}) + O\left( \left(\frac{q^{m_1}}{m_1}\right)^r \cdot 3^r \cdot \cf^{\frac{n}{2rm_1}} \right) \right); \]
			here $\kappa_{\overline{\mu}} = c_0(1-\Lambda)$ is the asymptotic density, with $c_0$ defined in~\eqref{c_0} and $\Lambda$ defined in~\eqref{prob.tsquared}, and $\cf < 1$ is the maximum Fourier amplitude of the measures.
			
Let $\lambda > 0$ be a small constant dependent only on the measures, and set
\begin{equation}\label{eq: m,r choice} m_1 = m_2 = m = \left\lceil \lambda \frac{n^{1/4} \sqrt{\log n}}{p^{1/2} (\log q)^{1/4}} \right\rceil, \quad r = \left\lfloor \lambda \frac{n^{1/4}}{p^{1/2} (\log q)^{1/4} \sqrt{\log n}} \right\rfloor. \end{equation}
Specifically, we impose $\lambda^4 < \frac{1}{2} p^2 \log(1/\cf)$; this ensures $rm\log q<\frac{n}{2rm}\log (1/\cf)$, which is sufficient for the argument below. 
\begin{remark}
Since $p$ and $q$ are fixed constants in this section, $m \asymp_q n^{1/4} \sqrt{\log n}$ and $r \asymp_q n^{1/4} / \sqrt{\log n}$. Our chosen $m$, $r$ extend to Section~\ref{sec: vary all}, when $p$ and $q$ vary.\end{remark}
We calculate the asymptotics of the error terms for $N'$:
			\begin{enumerate}
				\item By definition, $\frac{1}{m_1q^{m_1}}\ll\exp(-m_1 \log q)\ll \exp(-n^{1/4} \sqrt{\log n} c_q)\ll_qq^{-n^{1/4}}$, some $c_q>0$.
				\item Since $r\log r\asymp_q \frac{\lambda n^{1/4}}{\sqrt{\log n}} \left(\frac{1}{4} \log n + O_q(\log \log n)\right)$, we have \newline $O((\gamma r)^{-r})~=~O(\exp(-r \log (\gamma r)))~=~O_q(q^{-n^{1/4}})$.
				\item To bound the exponential sum error $\left(\frac{q^{m_1}}{m_1}\right)^r \cdot 3^r \cdot \cf^{\frac{n}{2rm_1}}$, consider its logarithm:
				\[ r m_1 \log q + r(\log 3 - \log m_1) - \frac{n}{2rm_1}\log(1/\cf). \]
				Dropping the negative middle term and using $(rm)^2\sim\lambda^4n/p^2\log q$ gives an upper bound of:
				\[ rm\log q\left(1-\frac{p^2}{2\lambda^4}\log(1/\cf)\right). \]
				By our choice of $\lambda$, the bracketed constant is negative, and $rm\asymp_q\sqrt{n}$ so this error decays much faster than $O_q(q^{-n^{1/4}})$.
			\end{enumerate}
			
			Finally, applying Lemma~\ref{lem: large primes N''}, we bound the large prime contribution by:
			\[ \mathbb{P}_{\overline{\mu}}(N'') \le \mathbb{P}_{\overline{\mu}}(f' = 0) + \frac{n-1}{m_2} \tilde{\cf}^{m_2}. \]
			If $f' = 0$ then $f$ is a perfect $p$-th power, meaning all $n - \lfloor n/p \rfloor$ coefficients not associated with $p$-th powers are exactly zero. This occurs with probability $\le D_0^{n(1-1/p)}$, for $D_0 = \max_i \mu_i(0)$. Since $\mu_i(0)=\frac1q\sum_{\alpha\in\FF_q}\hat{\mu_i}(\alpha)\le\frac{1+(q-1)\cf}{q}<1$, we get 
			 $\mathbb{P}_{\overline{\mu}}(f' = 0)=O_q(q^{-n^{1/4}})$.
			
			For the remaining term, we take the logarithm:
			\[ \log(n-1) - \log m_2 - m_2 \log(1/\tilde{\cf}). \]
			Substituting $m_2 \asymp_q n^{1/4} \sqrt{\log n}$, the term $-m_2 \log(1/\tilde{\cf})$ dominates. As $\tilde{\cf} < 1$ is constant, the probability decays asymptotically as $\exp(-c_qn^{1/4} \sqrt{\log n} \log(1/\tilde{\cf}))=O_q(q^{-n^{1/4}})$.
			
			Both the sieve error and the large prime error are $O_q(q^{-n^{1/4}})$, so $\mathbb{P}_{\overline{\mu}}(f \text{ is squarefree}) = \kappa_{\overline{\mu}} + O_q(q^{-n^{1/4}})$.	
\qed
		\section{Squarefrees with restricted coefficients}\label{section: ban more}
		We can now recover the results for restricted coefficient sets mentioned in the introduction as a direct consequence of Theorem~\ref{thm: general measure squarefree}.
\begin{corollary}\label{lem: affine subfield}
			Let $A_0, \dots, A_{n-1} \subseteq \mathbb{F}_q$, such that none of them are contained within a proper affine subspace of $\mathbb{F}_q$ viewed as a vector space over $\mathbb{F}_p$ (the characteristic field). 
			
			The number of squarefree, monic polynomials with coefficients restricted to these sets is:
			\[ \kappa \left(\prod_{i=0}^{n-1} |A_i|\right) (1+O_q(q^{-n^{1/4}})), \] where $\kappa=\frac1{1+1/q}(1-\frac{\mathbbm{1}\{0\in A_0\}}{|A_0|}\frac{\mathbbm{1}\{0\in A_1\}}{|A_1|})$.
		\end{corollary}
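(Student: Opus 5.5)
We apply Theorem~\ref{thm: general measure squarefree} with $\mu_i$ the uniform probability measure on $A_i$, so that $\mu_i(\xi)=\mathbbm{1}\{\xi\in A_i\}/|A_i|$. Under this choice $\overline{\mu}$ is the uniform measure on the $\prod_i|A_i|$ monic polynomials with restricted coefficients. The number of squarefree ones is therefore $\prod_i|A_i|\cdot\mathbb{P}_{\overline{\mu}}(f\text{ squarefree})$. We also have $\Lambda=\mu_0(0)\mu_1(0)=\frac{\mathbbm{1}\{0\in A_0\}}{|A_0|}\frac{\mathbbm{1}\{0\in A_1\}}{|A_1|}$, so $\kappa_{\overline{\mu}}=\kappa$ as stated.

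The main step is to verify the hypothesis that $\cf$ is bounded away from $1$. For $\alpha\ne0$, write
\[\hat{\mu}_i(\alpha)=\frac1{|A_i|}\sum_{\xi\in A_i}\psi(-\alpha\xi).\]
This is an average of unit complex numbers, so $|\hat\mu_i(\alpha)|=1$ exactly when $\psi(-\alpha\xi)$ is constant on $A_i$. That means $\mathrm{tr}(\alpha\xi)$ is constant on $A_i$. Now $\xi\mapsto\mathrm{tr}(\alpha\xi)$ is a nonzero $\FF_p$-linear functional, since the trace form is nondegenerate. So $A_i$ would lie in a coset of its kernel, which is a proper affine $\FF_p$-hyperplane. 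This contradicts the hypothesis, so $|\hat\mu_i(\alpha)|<1$ for every $i$ and every $\alpha\ne0$.

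For uniformity, note that there are only finitely many subsets of $\FF_q$ and finitely many $\alpha$, with $q$ fixed. We may therefore take $\cf$ to be the maximum of $|\hat\mu_A(\alpha)|$ over all $A\subseteq\FF_q$ not contained in a proper affine subspace and all $\alpha\in\FF_q^\times$. This is a finite maximum of numbers less than $1$, hence a constant $<1$ depending only on $q$. Alternatively, the explicit bound \eqref{eq: C def} with $w\ge 1/q$ gives the same conclusion.

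Theorem~\ref{thm: general measure squarefree} then gives $\mathbb{P}=\kappa+O_q(q^{-n^{1/4}})$. To convert the additive error into the multiplicative form $(1+O_q(q^{-n^{1/4}}))$, we need $\kappa$ bounded below. If $\Lambda>0$ then $0\in A_0\cap A_1$, and each $|A_i|\ge2$, since a singleton is a point, which is a proper affine subspace. Hence $\Lambda\le1/4$ and $\kappa\ge\frac34\cdot\frac1{1+1/q}\ge\frac12$, so dividing by $\kappa$ only changes the implied constant. The only delicate point is the uniformity of $\cf$, which the finiteness argument above handles.
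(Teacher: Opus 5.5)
Your proof is correct and follows the paper's argument: you take $\mu_i$ uniform on $A_i$, note that $|\hat\mu_i(\alpha)|=1$ would force $\mathrm{tr}(-\alpha\xi)$ to be constant on $A_i$ and so place $A_i$ in a proper affine hyperplane, and then apply Theorem~\ref{thm: general measure squarefree} and multiply by $\prod_i|A_i|$. You also make explicit two points the paper leaves implicit: that $\cf<1$ holds uniformly over the infinitely many indices $i$, because $\FF_q$ has only finitely many subsets, and that $\kappa\ge 1/2$, which is what turns the theorem's additive error into the stated multiplicative one.
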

		\begin{proof}
			Let $\mu_i$ be the uniform probability measure on $A_i$. The Fourier transform for $\alpha \in \mathbb{F}_q^\times$ is: 
			\[ \hat{\mu}_i(\alpha) = \frac{1}{|A_i|} \sum_{x \in A_i} \psi(-\alpha x). \]
	By definition, $\psi(-\alpha x)=\exp(2\pi i\mathrm{tr}(-\alpha x)/p)$, and it always has magnitude $1$; there are $|A_i|$ summands.
			
If $|\hat{\mu}_i(\alpha)|=1$, then the complex arguments are all the same, so $\mathrm{tr}(-\alpha x)$ is constant over all $x \in A_i$; say, it is $c\in\FF_p$. The $A_i$ is contained within the affine hyperplane defined by $\{x \in \mathbb{F}_q : \mathrm{tr}(-\alpha x) = c\}$. 

However, by our choice of $A_i$, this is impossible. 
Hence $\vert{}\hat{\mu}_i(\alpha)\vert{} < 1$; since $\mathbb{F}_q$ is finite, the maximum value of this magnitude over all $\alpha \in \mathbb{F}_q^\times$ is bounded strictly away from $1$. Thus, $\cf < 1$, Theorem~\ref{thm: general measure squarefree} is satisfied, and multiplying the resulting probability by the sample space size $\prod \vert{}A_i\vert{}$ completes the proof.			
		\end{proof}
		A special case which we care about is as follows: $q=p$ and $A_i=\{0,1\}$ for all $i$. This satisfies Corollary~\ref{lem: affine subfield} as $\FF_p$ has no proper affine subspaces of size $>1$. We then obtain Theorem~\ref{thm: main result}.

		Note that we wrote the polynomials are contained in $\FF_q[t]$; we can consider them as elements of $\FF_p[t]$ contained in this field extension, as this does not affect squarefreeness. In general, when all allowed coefficients are in $\FF_{p^k}\subseteq\FF_q$ but are not contained within an affine subspace of $\FF_{p^k}$, we get Corollary~\ref{thm: gen sec 4}.

We can make a comparison with Porritt's main theorems in~\cite{Porritt2019}; in them, he takes the number $s$ of forbidden digits to be at most $\sqrt{q}/2$. In our setting, since any proper affine subspace of $\FF_q$ has size at most $q/p$, we can extend to $s<q(p-1)/p$.
		\section{Working with large characteristic}\label{section: large char}
		All our results so far have relied on $q$, in particular $p$, being constant and thus small compared to $n$. 
We now consider the regime where the degree $n$ is fixed, and field size $q \to \infty$. In this $q$-limit, the probability of a random (unrestricted) polynomial being squarefree, which is $1 - 1/q$ for $n>1$, converges to $1$. 
		
		We wish to establish that this asymptotic behaviour persists even when almost all coefficients are deterministically fixed, provided the lowest-degree coefficients remain sufficiently free. In keeping with our previous style, we will obtain this as a special case of a measure-theoretic result.
		\begin{definition}	
			We define the ``maximum point mass'' of a probability measure $\mu$ on~$\mathbb{F}_q$:
			\[ D(\mu) = \max_{x \in \mathbb{F}_q} \mu(x). \]\end{definition}
		
		\begin{theorem}\label{thm: large q limit measure}
			Let $n \ge 2$ be a fixed integer. Let $\overline{\mu} = (\mu_0, \dots, \mu_{n-1})$ be a sequence of probability measures on $\mathbb{F}_q$. Let $f(t) = t^n + \sum_{i=0}^{n-1} a_i t^i$ be a random monic polynomial whose coefficients $a_i$ are drawn independently according to $\mu_i$. 
			
			Then the probability that $f$ is squarefree is bounded below by:
			\[ \mathbb{P}_{\overline{\mu}}(f \text{ is squarefree}) \ge 1 - D(\mu_1) - (n-1)D(\mu_0). \]
			In particular, if the distributions for the lowest two terms satisfy $D(\mu_0), D(\mu_1) \to 0$ as $q \to \infty$, then the probability that $f$ is squarefree tends to $1$, regardless of the restrictions placed on the higher-degree coefficients.
		\end{theorem}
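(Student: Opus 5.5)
The plan is to condition on all coefficients except the constant term, and then use the fact that $f$ has a repeated factor exactly when $f$ and $f'$ share a root in $\overline{\FF}_q$. Write $f = g + a_0$ with $g(t) = t^n + \sum_{i=1}^{n-1} a_i t^i$. Then $f' = g'$ does not depend on $a_0$, and its constant term is $a_1$. We split on whether $a_1=0$. This case has probability $\mu_1(0)\le D(\mu_1)$, and we simply bound the probability that $f$ fails to be squarefree there by $1$. This is also how we avoid the degenerate situation $f'=0$ (for instance when $p\mid n$ and the higher coefficients are rigged). If $a_1\neq 0$, then $f'$ has nonzero constant term, so $f'\not\equiv 0$ and $\deg f'\le n-1$.

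Fix $a_1\neq0$ and $a_2,\dots,a_{n-1}$. Let $\beta_1,\dots,\beta_k\in\overline{\FF}_q$ with $k\le n-1$ be the distinct roots of $f'=g'$. We use the standard criterion: $f$ is squarefree in $\FF_q[t]$ iff $\gcd(f,f')=1$, iff $f$ and $f'$ have no common root in $\overline{\FF}_q$. Over the perfect field $\FF_q$, squarefree and separable coincide, so this criterion is valid here. This is the same $P^2\mid f \iff P\mid f,\ P\mid f'$ principle used in the proof of Lemma~\ref{lem: large primes N''}. So $f$ fails to be squarefree iff $g(\beta_j)+a_0=0$ for some $j$, i.e.\ iff $a_0\in\{-g(\beta_1),\dots,-g(\beta_k)\}\cap\FF_q$. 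This is a set of at most $n-1$ elements of $\FF_q$ determined by $a_1,\dots,a_{n-1}$. Because $a_0$ is independent of $a_1,\dots,a_{n-1}$, the conditional probability that $a_0$ lands in this set is at most $(n-1)D(\mu_0)$.

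Averaging over $a_1,\dots,a_{n-1}$ gives
\[
\mathbb{P}_{\overline{\mu}}(f\text{ not squarefree}) \le \mathbb{P}(a_1=0) + (n-1)D(\mu_0) \le D(\mu_1)+(n-1)D(\mu_0),
\]
which is the claimed bound. The ``in particular'' statement follows immediately, since $n$ is fixed. The only step needing care, rather than any real obstacle, is making sure $f'$ is not identically zero. Handling it through the constant term $a_1$, rather than through the leading term $nt^{n-1}$ (which vanishes when $p\mid n$), is what makes the argument uniform in $q$ and explains why $\mu_1$ appears in the bound.
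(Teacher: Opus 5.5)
Your proof is correct and is essentially the paper's argument. You condition on the higher coefficients, spend $D(\mu_1)$ to guarantee $f'\neq 0$, and then note that at most $n-1$ values of $a_0$ let $f$ share a root with $f'$. The only cosmetic difference is that you exclude $a_1=0$ outright, whereas the paper excludes the single value $a_1=c$ with $h'+c=0$; since $h'=\sum_{i\ge 2} i a_i t^{i-1}$ has zero constant term, that value can only be $c=0$ anyway.
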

		
		\begin{proof}
			We decompose the random polynomial as $f(t) = h(t) + a_1 t + a_0$, where $h(t) = t^n + \sum_{i=2}^{n-1} a_i t^i$. The coefficients of $h(t)$ are sampled according to the product measure $\overline{\mu}_{>1} = \mu_2 \times \dots \times \mu_{n-1}$. We proceed by conditioning on a fixed $h(t)$, bounding the failure probability uniformly over all such choices. 
			
			A polynomial $f(t) \in \mathbb{F}_q[t]$ is squarefree if and only if it is coprime to its formal derivative. For $f(t)$ to share a root with $f'(t)$, there must exist some root $\rho \in \overline{\mathbb{F}}_q$ such that $f'(\rho) = 0$ and $f(\rho) = 0$.
			
			The formal derivative is
			$f'(t) = h'(t) + a_1$, which is $0$ if and only if $h'(t)$ is a constant polynomial $-c$ and $a_1 = c$. Because $h(t)$ is fixed, there is at most one specific ``bad" value $c \in \mathbb{F}_q$ that can cause $f'(t)$ to be the $0$ polynomial. Note that $\mathbb{P}(a_1=c)\le D(\mu_1)$. 
			
			If $a_1$ avoids this value (with probability at least $1 - D(\mu_1)$), $f'(t)$ is guaranteed to be a non-zero polynomial of degree $d$, where $0 \le d \le n-1$. Thus, $f'(t)$ has exactly $d$ roots in the algebraic closure $\overline{\mathbb{F}}_q$, counting multiplicities. Let these roots be $\rho_1, \dots, \rho_d$; 			
			for $f(t)$ to not be squarefree, it must vanish at one of the $\rho_i$:
			\[ f(\rho_i) = h(\rho_i) + a_1 \rho_i + a_0 = 0. \]
			This imposes a strict linear constraint on $a_0$:
			\[ a_0 = -h(\rho_i) - a_1 \rho_i. \]
			
			Because $h(t)$ and $a_1$ are completely determined prior to the selection of $a_0$, the specific values $-h(\rho_i) - a_1 \rho_i$ are fixed constants. Consequently, there are at most $d \le n-1$ specific ``bad'' values in $\overline{\mathbb{F}}_q$ (and thus at most $n-1$ elements in $\mathbb{F}_q$) that $a_0$ could take to force $f(t)$ to share a root with $f'(t)$.
			
			By the union bound, the probability that $a_0$ is drawn as any of these $n-1$ specific elements is at most $(n-1)D(\mu_0)$. 
			
			Using the Law of Total Probability to integrate over all possible choices of $h(t)$, the total probability of generating a polynomial that is not squarefree is bounded above by the sum of the probabilities of these two cases of failure:
			\begin{align*}
				\mathbb{P}_{\overline{\mu}}(f \text{ is not squarefree}) &\le D(\mu_1) + (n-1)D(\mu_0). \\
				\implies \mathbb{P}_{\overline{\mu}}(f \text{ is squarefree}) &\ge 1 - D(\mu_1) - (n-1)D(\mu_0).\qedhere \end{align*}
		\end{proof}
		We may now state this in terms of allowed and restricted coefficients. Specifically, we will restrict the two lowest coefficients of the polynomial to be chosen from sets $S_0$ and $S_1$, while keeping the rest fixed. The coefficient is uniformly selected across the allowed set, so the maximum point mass for these coefficients is $1/|S_i|$. 
		Fixed coefficients (of higher powers) can be thought of as being sampled by a Dirac delta measure.
		\begin{corollary}\label{thm: large q limit} 
			Let $n \ge 2$ be a fixed integer. Let $h(t) \in \mathbb{F}_q[t]$ be a fixed polynomial of degree $n$. Let $S_0, S_1 \subseteq \mathbb{F}_q$ be sets of allowed coefficients for the constant and linear terms, respectively. 
			
			If $f(t) = h(t) + a_1 t + a_0$ is generated by choosing $a_1 \in S_1$ and $a_0 \in S_0$ uniformly at random, then:
			\[ \mathbb{P}(f \text{ is squarefree}) \ge 1 - \frac{1}{|S_1|} - \frac{n-1}{|S_0|}. \]
			In particular, if $|S_0|, |S_1| \to \infty$ as $q \to \infty$, such polynomials are almost surely squarefree.
		\end{corollary}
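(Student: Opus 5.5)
This follows directly from Theorem~\ref{thm: large q limit measure}, applied to a suitable choice of measures. Write $h(t)=t^n+\sum_{i=0}^{n-1}h_it^i$. There is a normalisation point here: the corollary speaks of a fixed polynomial $h$ of degree $n$. If $h$ is not monic, we divide by its leading coefficient. This does not affect squarefreeness, and it replaces $S_0,S_1$ by scaled copies of the same cardinality. So we may assume $h$ is monic.

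Since the coefficients $h_0,h_1$ are absorbed into the random part, we set $a_0'=h_0+a_0$ and $a_1'=h_1+a_1$. The random polynomial then becomes
\[
f(t)=t^n+\sum_{i=2}^{n-1}h_it^i+a_1't+a_0'.
\]
We now define the measures. For $i\ge 2$, let $\mu_i=\delta_{h_i}$ be the Dirac measure. Let $\mu_1$ be uniform on the translate $h_1+S_1$ and $\mu_0$ uniform on $h_0+S_0$. Translation preserves cardinality, so
\[
D(\mu_1)=\frac{1}{|S_1|},\qquad D(\mu_0)=\frac{1}{|S_0|}.
\]
The coefficients are independent, since $a_0$ and $a_1$ are chosen independently and the higher coefficients are deterministic. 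The distribution of $f$ is therefore exactly the one in Theorem~\ref{thm: large q limit measure} with $\overline{\mu}=(\mu_0,\dots,\mu_{n-1})$.

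Applying Theorem~\ref{thm: large q limit measure} gives
\[
\mathbb{P}(f\text{ squarefree})\ge 1-\frac{1}{|S_1|}-\frac{n-1}{|S_0|}.
\]
For the ``in particular'' clause, $n$ is fixed, so this lower bound tends to $1$ whenever $|S_0|,|S_1|\to\infty$.

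The only step that needs care is the bookkeeping: $h$ already contributes its own constant and linear terms, and these must be absorbed into the translated measures. Translating a uniform measure leaves its maximum point mass unchanged, so no real obstacle arises.
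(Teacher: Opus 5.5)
Your proposal is correct and follows the paper's route: the paper obtains this corollary directly from Theorem~\ref{thm: large q limit measure} by taking uniform measures for the two lowest coefficients (maximum point mass $1/|S_i|$) and Dirac measures for the fixed higher coefficients. Your extra bookkeeping fills in details the paper leaves implicit. It normalises a non-monic $h$ and absorbs $h_0,h_1$ into translated uniform measures, and neither step changes the maximum point mass.
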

In this format, our result is closely related to the work of Rudnick~\cite{rudnickvaryq}. We give a corollary of Rudnick's main theorem, applicable to our setting:

\begin{theorem}[{Special case of \cite[Theorem 1.2]{rudnickvaryq}}]\label{thm: rudnick special}
	Let $h(t) \in \mathbb{F}_q[t]$ be a fixed monic polynomial of degree $n \ge 2$. As $q \to \infty$, the proportion of elements $(a_0,a_1) \in \mathbb{F}_q^2$ such that the polynomial $h(t)+a_1t+a_0$ is squarefree in $\mathbb{F}_q[t]$ is given by:
	\[ \frac{\#\{(a_0,a_1)\in \mathbb{F}_q^2 : h(t) +a_1t+a_0 \text{ is squarefree} \}}{q^2} = 1 + O\left(\frac{n^2}{q}\right). \]
\end{theorem}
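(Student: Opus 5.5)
Since this is presented as a special case of Rudnick's theorem, one route is simply to check that the family $\{h(t)+a_1t+a_0\}$ is the one covered by \cite[Theorem 1.2]{rudnickvaryq}: there the error term is $O(n^2/q)$ uniformly in $h$. I would instead give a direct argument, in the spirit of the proof of Theorem~\ref{thm: large q limit measure}, that recovers the stated error term. The count concerns $(a_0,a_1)\in\FF_q^2$ with $f=h+a_1t+a_0$ not squarefree. By the same discriminant reasoning, it suffices to show this set has size $O(n^2 q)$.

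Take $\mu_0,\mu_1$ uniform on $\FF_q$ and the higher coefficients as Dirac masses fixed by $h$. Theorem~\ref{thm: large q limit measure}, or its Corollary~\ref{thm: large q limit} with $S_0=S_1=\FF_q$, then gives
\[\mathbb{P}(f\text{ not squarefree})\le \frac1q+\frac{n-1}{q}=\frac nq,\]
which is even better than $O(n^2/q)$. Concretely, for each $a_1$ there is at most one value making $f'\equiv 0$, which is at most $q$ pairs in total. Otherwise $f'=h'+a_1$ is nonzero of degree at most $n-1$, and each of its roots $\rho$ forces $a_0=-h(\rho)-a_1\rho$. This excludes at most $n-1$ values of $a_0$ lying in $\FF_q$, giving at most $(n-1)q$ further pairs. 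Dividing by $q^2$ yields proportion $1-O(n/q)\subseteq 1+O(n^2/q)$.

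The only point needing care is the degenerate case where $f'$ vanishes identically. If $p\nmid n$ this never happens, since the leading term $nt^{n-1}$ of $f'$ is nonzero. If $p\mid n$, then $h'+a_1$ can be zero for at most one $a_1$, contributing at most $q$ pairs. Either way the bound is uniform in $h$ and needs no lower bound on $q$ relative to $n$. This matches Rudnick's statement, whose $n^2$ arises from a discriminant-degree (Lang--Weil type) bound for general families; in this linear family the elementary root count suffices.
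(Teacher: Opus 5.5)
Your argument is correct, but it takes a different route from the paper's proof.

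The paper gives a pure reduction. It applies \cite[Theorem 1.2]{rudnickvaryq} to $f(t,x)=h(t)-t^2+x$, with $x=t^2+a_1t+a_0$ ranging over monic quadratics. It then checks Rudnick's hypotheses: separability in $x$, squarefree content (since $1$ is coprime to $h-t^2$), bounded degree, and height at most $n$. The $n^2$ in the error term is simply inherited from Rudnick's general bound.

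You instead redo the elementary root count from the proof of Theorem~\ref{thm: large q limit measure}:
\begin{itemize}
\item at most one $a_1$ makes $f'=h'+a_1$ vanish, costing at most $q$ pairs;
\item otherwise each of the at most $n-1$ roots $\rho$ of $f'$ pins down $a_0=-h(\rho)-a_1\rho$.
\end{itemize}
This gives at most $q+(q-1)(n-1)\le nq$ bad pairs. In fact the statement is exactly Corollary~\ref{thm: large q limit} with $S_0=S_1=\FF_q$. So, within the paper, it follows from the elementary probabilistic argument with the sharper error $n/q$, without appealing to Rudnick at all. This reverses the direction of the paper's remark that Corollary~\ref{thm: large q limit} could be deduced from Rudnick's methods.

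Comparison of the two routes:
\begin{itemize}
\item Your route is self-contained, has an explicit constant, and is uniform in $h$ and $q$. It works because of the special structure of this family: $a_0$ does not occur in $f'$ and enters $f$ only as a translate, so each root of $f'$ excludes at most one value of $a_0$.
\item The paper's route costs nothing given Rudnick's theorem and places the result inside his framework. That framework handles general separable families $f(t,x)$, where no such one-root-kills-one-value count is available, at the price of the weaker $n^2/q$.
\end{itemize}

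One wording slip. Your sentence ``for each $a_1$ there is at most one value making $f'\equiv 0$'' should say that there is at most one value of $a_1$ making $f'\equiv 0$, and that for it all $q$ values of $a_0$ are counted as bad. Your final paragraph states this correctly, and the count of $q$ pairs is right.
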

We prove this from \cite[Theorem 1.2]{rudnickvaryq} by taking $f(t,x)=h(t)-t^2+x$, where $x$ is an arbitrary monic polynomial in $t$ of degree $2$, which is $t^2+a_1t+a_0$. This gives separability in $x$, squarefree content as $1$ is coprime to $h(t)-t^2$, bounded degree and a height of at most $\max(\deg_t(h),2)=n$.
Corollary~\ref{thm: large q limit} can be deduced by modifying Rudnick's proof methods, but the approach is more complicated than elementary probability proof presented above.

Corollary~\ref{thm: large q limit} also improves the following result of Oppenheim and Shusterman~\cite{oppshu}:
		\begin{corollary}[{\cite[Corollary 1.4]{oppshu}}] \label{cor}
			
			Fix an integer $n\ge 2$.
			For every finite field $\mathbb{F}$ pick subsets $S_0 = S_0(\mathbb{F}), \dots, S_n = S_n(\mathbb{F}) \subseteq \mathbb{F}$ having the same cardinality $C(\mathbb{F})$ such that 
			$\lim_{|\mathbb{F}| \to \infty} C(\mathbb{F}) = \infty.$
			Then
			\begin{equation*}
				\lim_{|\mathbb{F}| \to \infty} \frac{\# \big\{(a_0, \dots, a_n) \in S_0 \times \dots \times S_n 
					: \sum_{i=0}^n a_iT^i \ \text{is squarefree} \big\}}
				{\# (S_0 \times \dots \times S_n)} = 1.
			\end{equation*}
		\end{corollary}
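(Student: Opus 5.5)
The statement to prove is \cite[Corollary 1.4]{oppshu}. I would deduce it directly from Corollary~\ref{thm: large q limit}, or more cleanly from Theorem~\ref{thm: large q limit measure}, so that the higher coefficients need not be fixed. Take $\mu_i$ to be the uniform probability measure on $S_i$ for $0\le i\le n$. Then $D(\mu_i)=1/C(\mathbb{F})$, and all coefficients are independent.

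The first point to handle is that the polynomial $\sum_{i=0}^n a_iT^i$ need not be monic, and may even have degree less than $n$. I would condition on the top coefficient $a_n$.
\begin{itemize}
\item If $a_n\neq0$, divide through by $a_n$. Squarefreeness is unchanged by multiplying by a unit. The resulting monic polynomial of degree $n$ has lower coefficients $a_i/a_n$, and these are independent with laws $\mu_i$ pushed forward under $x\mapsto x/a_n$. That map is a bijection, so every point mass is still at most $1/C$. Applying Theorem~\ref{thm: large q limit measure} (its proof never uses anything beyond independence and the point-mass bounds) gives failure probability at most $1/C+(n-1)/C=n/C$.
\item If $a_n=0$, this happens with probability at most $1/C$, and I simply count it as a failure.
\end{itemize}
Averaging over $a_n$, the proportion of non-squarefree tuples is at most $(n+1)/C(\mathbb{F})$. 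This tends to $0$ because $C(\mathbb{F})\to\infty$ with $n$ fixed, and the limit in the statement follows.

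The only delicate step is checking that the proof of Theorem~\ref{thm: large q limit measure} really is uniform in the conditioning. It conditions on the higher coefficients $h$, then on $a_1$, and each time counts the bad values of the next coefficient. After rescaling by $a_n$ this still works for every fixed $a_n$. A degenerate case should also be noted: $f'=0$ identically requires $a_1$ to take a single value, which costs probability at most $1/C$.

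One could try to push the argument to degree $<n$ when $a_n=0$, but that is unnecessary, since discarding that case already costs only $O(1/C)$. The argument also gives the quantitative strengthening $1-O(n/C)$, which is the sense in which the result "improves" the statement of \cite{oppshu}.
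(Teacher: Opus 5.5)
Your argument is correct. It is exactly the deduction implicit in the paper's remark that Corollary~\ref{thm: large q limit} improves \cite[Corollary 1.4]{oppshu}: condition on the higher coefficients and apply the bound $D(\mu_1)+(n-1)D(\mu_0)$ from Theorem~\ref{thm: large q limit measure}, which gives the explicit rate $1-O(n/C(\mathbb{F}))$. The paper never writes this out, and your conditioning on $a_n$ (rescaling when $a_n\neq0$, discarding $a_n=0$ at cost at most $1/C$) supplies the bookkeeping it skips. The uniformity you flag as delicate is automatic, since the theorem's statement already applies to the conditional product measure for each fixed $a_n$.
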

		Notice that here, we require a choice for every coefficient, rather than the lowest two. Oppenheim and Shusterman also provide some results with most coefficients fixed in their paper, which are about existence of squarefree polynomials in the resulting ``box'' set $\prod S_i$ rather than a quantitative result.
\section{Varying $n$ and $q$ simultaneously}\label{sec: vary all}

So far we have considered both the case where $q$ is fixed while
$n\to\infty$, and the case where $n$ is fixed while $q\to\infty$.
We now allow $n$ and $q$ to tend to infinity simultaneously.

The main difficulty is that the uniform bound on the non-trivial Fourier
coefficients tends to $1$ as the characteristic $p$ grows. We therefore require a minimum rate of growth for $n$ relative to $q$ and $p$ to ensure the error terms remain negligible. We will use the Parseval improvement~\eqref{eq: M ineq improved gen} of Lemma~\ref{lemma: halasz-bound modified} to obtain the tightest bounds.
\subsection{Fourier bounds and a Parseval refinement}
Given a collection of probability measures $\mu_i$ on $\mathbb{F}_q$, we let $w = \min_i\min_{x \in \text{supp}(\mu_i)} \mu_i(x)$, and $\cf=\max_i\max_{b\ne 0}|\hat{\mu_i}(b)|$. The bound for $|\hat{\mu}(b)|$ is 
as in~\eqref{eq: C def}, so
\begin{align*}
	\cf^2 
	&\le (1-w + w\cos(2\pi/p))^2 + (w\sin(2\pi/p))^2 \\
	&= (1-w)^2 + 2w(1-w)\cos(2\pi/p) + w^2 \\
	&= 1 - 2w(1-w)(1 - \cos(2\pi/p)).
\end{align*}
Using Taylor series, $1 - \cos(2\pi/p) = \frac{2\pi^2}{p^2} -\frac{2\pi^4}{3p^4}+\dots\ge \frac{2\pi^2}{p^2} -\frac{2\pi^4}{3p^4}\ge \frac{\pi^2}{2p^2}$ -- here we use $p\ge 2$ for the estimation. Hence
\[ \cf^2 \le 1 - \frac{\pi^2 w(1-w)}{p^2}. \]
Again using infinite series, for $x=1-\cf^2\ge 0$ we have $(1-x)^{-1/2}\ge 1 + x/2$. Thus
\begin{equation}\label{eq: C_asymptotic}
	\frac1{\cf} \ge 1 + \frac{\pi^2 w(1-w)}{2p^2}.
\end{equation}
Hence there exists a constant $c>0$ such that $\log(\frac1\cf) \ge \frac{cw}{p^2}$ (recall that $w\le 1/2$). 

Recall also $\tilde{\cf} = 1 - \frac{1-\cf}{q}$, defined in Lemma~\ref{lem: large primes N''}; similarly, there exists $\tilde{c}$ such that
\begin{equation}\label{eq: C1_asymptotic}
\log(1/\cf) \ge \frac{cw}{p^2}, \qquad	\log(1/\tilde{\cf}) \ge \frac{\tilde{c}w}{q p^2}.
\end{equation}
Throughout this section, we will assume that the support of the $\mu_i$ is not contained in a proper affine
hyperplane; then we can strengthen the H\"older bound from
Lemma~\ref{lemma: halasz-bound modified}. Indeed, under this condition
$b=0$ is the only Fourier frequency at which the magnitude can equal $1$.

\begin{lemma}\label{lem: parseval refinement}
	Let $
	g(T)=\sum_{i=0}^{K}\varepsilon_iT^i$,
	where the $\varepsilon_i$ are independent and distributed according to
	probability measures $\mu_i$ on $\mathbb{F}_q$. Suppose that no
	$\operatorname{supp}(\mu_i)$ is contained in a proper affine hyperplane,
	and set $
	V_i
	=
	q\sum_{x\in\mathbb{F}_q}\mu_i(x)^2$, $
	V_*=\max_iV_i.$
	Let $P$ be a prime polynomial of degree $d\le K+1$, and set $	j=\left\lfloor\frac{K+1}{d}\right\rfloor.$
	If $j\ge2$, then for every polynomial $A$,
	\begin{equation}\label{eq: parseval holder bound}
		\mathbb{P}(g\equiv A\pmod P)
		\le
		\left(
		\frac{1+(V_*-1)\cf^{\,j-2}}{q}
		\right)^d.
	\end{equation}
\end{lemma}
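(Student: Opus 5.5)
We follow the proof of Lemma~\ref{lemma: halasz-bound modified} up to the point where the H\"older inequality has been applied, and change only the final per-coordinate estimate. First translate away $A$: subtracting a fixed polynomial only shifts each $\mu_i$, which leaves $|\hat\mu_i|$, $V_i$ and $\cf$ unchanged, and it preserves the hypothesis that no support lies in a proper affine hyperplane. So we may take $A=0$. By orthogonality as in~\eqref{eq: probabilistic expression P|f},
\[
\mathbb{P}(P\mid g)\le \frac{1}{q^d}\sum_{\vec a\in\mathbb{F}_q^d}\prod_{i=0}^{K}\bigl|\E[X(\varepsilon_i,\vec a)]\bigr|.
\]
We then split the indices $0,\dots,jd-1$ into $j$ blocks of $d$ consecutive indices, discard the remaining factors (each is at most $1$), and apply H\"older with all exponents equal to $j$, exactly as in~\eqref{eq: holder first bound}.

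Next we use Claim~\ref{claim: d zeroes}. Within each block, the map from $\vec a$ to the $d$ relevant Laurent coefficients is linear in $\vec a$ and injective, so it is a bijection of $\mathbb{F}_q^d$. As in~\eqref{eq: holder final ver}, this gives
\[
\mathbb{P}(P\mid g)\le \frac{1}{q^d}\max_k\prod_{h=0}^{d-1}\sum_{b\in\mathbb{F}_q}\bigl|\hat\mu_{dk+h}(-b)\bigr|^{j}.
\]

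It remains to bound each inner sum by $1+(V_*-1)\cf^{\,j-2}$; this is the Parseval refinement of~\eqref{eq: M ineq improved gen}, sharpened slightly. Isolate the term $b=0$, which equals $1$. Because no support lies in a proper affine hyperplane, the argument in the proof of Corollary~\ref{lem: affine subfield} shows $|\hat\mu_i(-b)|\le\cf<1$ for every $b\ne0$. Since $j\ge2$,
\[
\sum_{b\neq0}|\hat\mu_i(-b)|^{j}\le \cf^{\,j-2}\sum_{b\ne0}|\hat\mu_i(-b)|^2=\cf^{\,j-2}(V_i-1)\le \cf^{\,j-2}(V_*-1).
\]
The equality is Parseval: $\sum_b|\hat\mu_i(b)|^2=q\sum_x\mu_i(x)^2=V_i$, and we subtract the $b=0$ term, which is $1$. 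Taking the product over the $d$ values of $h$ and the maximum over $k$ then yields~\eqref{eq: parseval holder bound}.

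The one step that needs care is this last estimate. To get $V_*-1$ rather than $V_*$, we must remove the trivial frequency $b=0$ before applying Parseval, and this is exactly where the hyperplane hypothesis is used: it guarantees that no nonzero $b$ has $|\hat\mu_i(b)|=1$. We should also check that the translation step preserves this hypothesis, which it does since a translate of a set lies in a proper affine hyperplane only if the set does. The condition $j\ge2$ is needed so that the exponent $j-2$ is nonnegative.
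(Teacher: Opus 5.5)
Your argument matches the paper's proof step for step: reduce to $A=0$, expand by orthogonality, apply H\"older over $j$ blocks of $d$ consecutive indices, use Claim~\ref{claim: d zeroes} to make each block's coefficient map a bijection of $\mathbb{F}_q^d$, then bound $\sum_b|\hat\mu_i(b)|^j\le 1+(V_i-1)\cf^{\,j-2}$ by isolating $b=0$ and applying Parseval.

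\textbf{The case $P=T$.} There is one case that neither you nor the paper handles, and there the bijection step genuinely fails. Claim~\ref{claim: d zeroes} requires the denominator not to be a power of $t$. For $P=T$ (so $d=1$), the coefficient map of block $k$ is $a\mapsto (a/T)_{-k-1}$, which is identically zero for $k\ge1$, so it is not a bijection. In fact the inequality itself is false there. Take $q=p=3$, every $\mu_i$ uniform on $\{0,1\}$ (not contained in a point, the only proper affine subspace of $\mathbb{F}_3$), $K=2$ and $A=0$. Then $\cf=\tfrac12$, $V_*=\tfrac32$ and $j=3$, so the right-hand side is $\tfrac13\bigl(1+\tfrac12\cdot\tfrac12\bigr)=\tfrac5{12}$. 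But $\mathbb{P}(T\mid g)=\mu_0(0)=\tfrac12$. So you need to add the hypothesis $P\neq T$, and your proof (like the paper's) is then correct. This restriction is harmless in the application in Theorem~\ref{thm: simultaneous_limit}, where $\deg P\ge m$ is large. By contrast, Lemma~\ref{lemma: halasz-bound modified} survives this case only because its exponent $\deg P-1$ makes the $d=1$ bound trivially equal to $1$.

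\textbf{Role of the hyperplane hypothesis.} A smaller point: this hypothesis is not what allows you to remove $b=0$ and obtain $V_*-1$. The bound $|\hat\mu_i(b)|\le\cf$ for $b\neq0$ is simply the definition of $\cf$. Hence $\sum_b|\hat\mu_i(b)|^j\le 1+(V_i-1)\cf^{\,j-2}$ holds for arbitrary measures once $j\ge2$. The hypothesis only guarantees $\cf<1$, which is what makes the bound nontrivial.
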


\begin{proof}
	We repeat the H\"older argument from
	Lemma~\ref{lemma: halasz-bound modified}. Dividing the first $jd$
	coefficients into $j$ consecutive blocks of length $d$ and applying
	H\"older's inequality gives
	\[
	\mathbb{P}(g\equiv A\pmod P)
	\le
	\left(
	\frac{1}{q}
	\max_i
	\sum_{b\in\mathbb{F}_q}
	|\widehat{\mu_i}(b)|^j
	\right)^d.
	\]
	Here we have used the same bijectivity of the coefficient maps appearing
	in the proof of Lemma~\ref{lemma: halasz-bound modified}. Now, by Parseval's identity,
	\[
	\sum_{b\in\mathbb{F}_q}
	|\widehat{\mu_i}(b)|^2
	=
	q\sum_{x\in\mathbb{F}_q}\mu_i(x)^2
	=
	V_i.
	\]
	Since $\widehat{\mu_i}(0)=1$, while
	$|\widehat{\mu_i}(b)|\le \cf$ for $b\ne0$, we obtain
	\begin{align*}
		\sum_{b\in\mathbb{F}_q}
		|\widehat{\mu_i}(b)|^j
		&=
		1+
		\sum_{b\ne0}
		|\widehat{\mu_i}(b)|^{j-2}
		|\widehat{\mu_i}(b)|^2\\
		&\le
		1+
	\cf^{\,j-2}
		\sum_{b\ne0}
		|\widehat{\mu_i}(b)|^2
		=
		1+(V_i-1)\cf^{\,j-2}\\
		&\le
		1+(V_*-1)\cf^{\,j-2}.
	\end{align*}
	Substituting this into the H\"older bound proves the result.
\end{proof}

We now present the resulting simultaneous limit theorem:

\begin{theorem}\label{thm: simultaneous_limit}
	There exist absolute constants $C,c'>0$ with the following property:
Let $q=p^k$ be a prime power and let $n\ge 2$. Let
$\overline{\mu}=(\mu_0,\dots,\mu_{n-1})$
be probability measures on $\mathbb{F}_q$, and suppose that, for every
$0\le i<n$, the support of $\mu_i$ is not contained in a proper affine
hyperplane of $\mathbb{F}_q$. 
Define
\[
w=w(\overline{\mu})
:=\min_{0\le i<n}\min_{x\in\operatorname{supp}(\mu_i)}\mu_i(x).
\]
Suppose that
\begin{equation}\label{eq: simultaneous growth}
n\ge		C
\frac{qp^5\log q}{w^2}
\left(
\log q+\log\frac1w
\right).
\end{equation}
Then a random monic polynomial $
f(t)=t^n+\sum_{i=0}^{n-1}\varepsilon_it^i$ with $\varepsilon_i\sim\mu_i$ independent coefficients, is squarefree with probability
	\[
	\mathbb{P}_{\overline{\mu}}(f\text{ is squarefree})
	=
	\frac{1}{1+1/q}
	\left(1-\mu_0(0)\mu_1(0)\right)
	+
	O(E(n,p,q,w)),
	\]
	where
	\begin{equation}\label{eq: simultaneous error}
			E(n,p,q,w)=
			\exp\left(
			-c'
			\frac{
				w^{1/4}n^{1/4}\sqrt{\log n}
			}{
				p^{1/2}(\log q)^{1/4}
			}\cdot(\log q)
			\right)+			\exp\left(
			-c'
			\frac{w^2n}{qp^5\log q}
			\right).
	\end{equation}
	In particular, along any sequence of $(n,q,w)$ where $q\to\infty$ and
	\eqref{eq: simultaneous growth} holds, we get $E(n,p,q,w)\to~0$.
\end{theorem}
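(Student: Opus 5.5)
We will follow the proof of Theorem~\ref{thm: general measure squarefree} line by line, but keep every constant explicit in $p$, $q$ and $w$. As there, we start from the sandwich~\eqref{eq: sec 3 start} and take $m_1=m_2=m$. The small primes are handled by Lemma~\ref{lem: estimating N'} and the large primes by Lemma~\ref{lem: large primes N''}. In the large prime step we replace the crude bound of Lemma~\ref{lemma: halasz-bound modified} by Lemma~\ref{lem: parseval refinement}, which is available because no support lies in a proper affine hyperplane. The only quantitative input about the measures is~\eqref{eq: C1_asymptotic}, namely $\log(1/\cf)\ge cw/p^2$.

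\textbf{Step 1 (small primes).} Lemma~\ref{lem: estimating N'} gives three error terms: $1/(mq^m)$, $(\gamma r)^{-r}$, and $(q^m/m)^r3^r\cf^{n/2rm}$. We choose $m,r$ as in~\eqref{eq: m,r choice}, but with $\lambda$ replaced by $\lambda w^{1/4}$. The requirement on $\lambda$ then reads $\lambda^4 w<\tfrac12 p^2\log(1/\cf)$, and by~\eqref{eq: C1_asymptotic} this holds for an absolute $\lambda$. With this choice:
\begin{itemize}
\item the first error is $\exp(-m\log q)$, which is exactly the first term of $E$ in~\eqref{eq: simultaneous error};
\item the third error has logarithm at most $-c\,rm\log q\asymp -\sqrt{wn\log q}/p$, which is dominated by the first term of $E$;
\item the second error $(\gamma r)^{-r}$ needs $r$ to be large. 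Here $\gamma$ depends only on $v_1\le\sum_P\|P\|^{-2}=O(1)$, so $\gamma$ is absolute. We also need $\alpha=r/v_1>2$ and $r<m/2$.
\end{itemize}
These requirements, together with $r\log r\gg m\log q$ up to constants, are where a lower bound of the form~\eqref{eq: simultaneous growth} on $n$ enters. I expect the $\log q+\log(1/w)$ factor to come from this step.

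\textbf{Step 2 (large primes).} We rerun the proof of Lemma~\ref{lem: large primes N''}. The term $\mathbb{P}(f'=0)$ is at most $D_0^{n(1-1/p)}$ with $D_0\le 1-w$, so it is $\le e^{-wn/2}$, which is negligible. For $P$ with $\deg P=d\ge m$ we condition on $f'$ and obtain a congruence in the $K=\lfloor n/p\rfloor$ free $p$-th power coefficients. We split by the size of $j=\lfloor (K+1)/d\rfloor$:
\begin{itemize}
\item \emph{Case $j\ge 2$.} Lemma~\ref{lem: parseval refinement} bounds the probability by $\bigl((1+(V_*-1)\cf^{j-2})/q\bigr)^d$. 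We have $V_*\le q$. Once $\cf^{j-2}\le 1/(2q)$, the bound is at most $(3/(2q))^d\le (3/4)^d$.
\item \emph{Case $j=1$, i.e.\ $d>(K+1)/2$.} We use $\tilde{\cf}^{d-1}$ from Lemma~\ref{lemma: halasz-bound modified} and the equality argument for $d>K+1$. This gives decay $\exp(-\tilde cwd/(qp^2))$ with $d\gg n/p$, which is $\le\exp(-c'w n/(qp^3))$. This is dominated by the second term of $E$.
\end{itemize}

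\textbf{Main obstacle.} The difficult case is the intermediate range of $d$, where $j$ is small but we still need $\cf^{j-2}\le 1/(2q)$. Using $\log(1/\cf)\ge cw/p^2$, this requires $j\gg p^2\log q/w$, i.e.\ $d\ll wn/(p^3\log q)$. For larger $d$, I would first try to absorb these primes into the crude $\tilde{\cf}$ bound. That costs $\exp(-cw d/(qp^2))$ with $d\gg wn/(p^3\log q)$, which yields exactly the second term $\exp(-c'w^2n/(qp^5\log q))$ of $E$.

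To finish, we multiply each prime's bound by the count $(n-1)/m$ of large prime factors of $f'$. We also need $m\le wn/(p^3\log q)$ so that the ranges are consistent, and this is guaranteed by~\eqref{eq: simultaneous growth} with $C$ large. Collecting the terms gives $O(E)$, and $E\to0$ along the stated sequences.
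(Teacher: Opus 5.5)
Your outline follows the paper's proof almost step for step: the same $m,r$ with $\lambda\propto w^{1/4}$, the same bound on $\mathbb{P}_{\overline{\mu}}(f'=0)$, and the same split of the large primes at $d_*\asymp wn/(p^3\log q)$. Below $d_*$ you use Lemma~\ref{lem: parseval refinement}, above it the crude $\tilde{\cf}$ bound. Your threshold $\cf^{j-2}\le 1/(2q)$, giving $(3/(2q))^d$, is even a little better than the paper's $(2/q)^d$, which is vacuous at $q=2$.

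\textbf{The gap is in Step~1, at the truncation error $(\gamma r)^{-r}$.} The first term of $E$ is of the form $\exp(-c'm\log q)$ after adjusting $c'$. You correctly note that bounding $(\gamma r)^{-r}$ by it requires $r\log(\gamma r)\gg m\log q$. But you leave this to~\eqref{eq: simultaneous growth}, and no lower bound on $n$ can supply it.
\begin{itemize}
\item With $m,r$ as in~\eqref{eq: m,r choice} one has $r\le m/\log n$ and $\log r\le(\frac14+o(1))\log n$. So $r\log r\le m(\frac14+o(1))$ for every $n$, and with your absolute $\gamma$ this gives $r\log(\gamma r)=O(m)$.
\item Even using $v_1\le\sum_{k\ge1}1/(kq^k)\ll 1/q$, which permits $\gamma\asymp q$, you only get $r\log(\gamma r)\le m(\frac14+\frac{\log q}{\log n}+o(1))=O(m)$, since $q\le n$ under the growth hypothesis.
\item So the ratio of the two exponents is $O(1/\log q)$ uniformly in $n$, and as $q\to\infty$ no absolute $c'$ works.
\item The second term of $E$ cannot absorb the loss either: for $n$ large compared with $q$ it is much smaller than the first.
\end{itemize}

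The paper's proof passes over exactly this point with the sentence ``by definition of $r$, the error $O(\exp(-r\log(\gamma r)))$ is also small enough''. So this is a defect you share with it, not an idea you are missing. What the argument actually gives is the statement with the first component of $E$ weakened to $\exp\bigl(-c'w^{1/4}n^{1/4}\sqrt{\log n}/(p^{1/2}(\log q)^{1/4})\bigr)$, i.e.\ without the factor $\log q$. This weaker term still tends to $0$ and still absorbs your other small-prime and medium-prime errors.

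Recovering part of the lost $\log q$ would require rebalancing $m$ against $r$ under the constraint $rm\ll\sqrt{wn/\log q}/p$ coming from the exponential-sum error. Appealing to~\eqref{eq: simultaneous growth} does not help.

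Relatedly, the factor $\log q+\log(1/w)$ in~\eqref{eq: simultaneous growth} is not what Step~1 needs. In the paper it is what lets the prefactor $n/d_*\le p^3\log q/(c_2w)$ be absorbed into $\exp(-c'w^2n/(qp^5\log q))$. In your version it plays the same role for the prefactor $(n-1)/m$.
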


\begin{proof}
	Most of the proof mirrors that of Theorem~\ref{thm: general measure squarefree}. We follow the method of subsection~\ref{subsection: proof thm 1.2} to bound the errors, with slight modifications for our new setting.
	
	In order to make the error for $N'$ proved in Lemma~\ref{lem: estimating N'} small, we first take care of the error~\eqref{eq: N' error} which is dependent on $\cf$. 
Taking the logarithm and dropping the $r(\log 3-\log m_1)$ term from the LHS, as before:
\[ r m_1 \log q - \frac{n}{2rm_1}\log(1/\cf) \le r m_1 \log q - \frac{cw n}{2rm_1 p^2}. \]
For the negative term to dominate, we require $(r m_1)^2 < \frac{cwn}{2p^2 \log q}$. 
Our previous choices of $r$, $m_1$ and $m_2$ in \eqref{eq: m,r choice} again work, although $p$ and $q$ are not fixed.
Here we pick $\lambda=\sqrt[4]{\frac{cw}4}$, dependent only on $w$, so that $\lambda > 0$ satisfies $\lambda^4 <\frac{cw}2= \frac{1}{2} p^2\Bigl(\frac{cw}{p^2}\Bigr)\le \frac{1}{2} p^2 \log(1/\cf)$. Hence~\eqref{eq: N' error} is at most $\exp(rm\log q\left(1-\frac{p^2}{2\lambda^4}\log(\frac1{\cf})\right))$, which is bounded by the first component of $E(n,p,q,w)$. The other two parts of the $N'$ error are also bounded by this component: as before, $\frac{1}{m_1q^{m_1}}\ll\exp(-m_1 \log q)$ and we apply the definition of $m_1$, and by definition of $r$, the error $O(\exp(-r\log(\gamma r)))$ is also small enough.

	It remains to show $\mathbb{P}_{\overline{\mu}}(N'') \le \mathbb{P}_{\overline{\mu}}(f' = 0) +\sum_{P \in \mathcal{P}^{\ge m_2}} \mathbb{P}_{\overline{\mu}}(P^2 \mid f)=O(E(n,p,q,w))$.
Note that $\mathbb{P}_{\overline{\mu}}(f' = 0)\le (1-w)^{n(1-1/p)}$, which is much smaller than $E(n, p, q,w)$.	
	
	We now assume that $f'\ne0$.
We partition the large primes into two ranges: degree smaller than and larger than some $d_{*}$.
Let
$V_*	=	\max_i q\sum_{x\in\mathbb{F}_q}\mu_i(x)^2$, as before;
in particular $V_*\le q$. We choose $d_*$ so that for $d \le d_{*}$ we have $(V_{*}-1)\cf^{j-2} \le 1$, where $j=\lfloor (\lfloor n/p \rfloor+1)/d \rfloor $. This requires $\cf^{j-2} \le 1/q$, which is equivalent to $(j-2) \log(1/\cf) \ge \log q$. Since $\log(1/\cf) \ge cw p^{-2}$, we require $j-2 \ge \frac{p^2 \log q}{cw}$. Hence we can choose a suitable constant $c_2$ such that
\[ d_{*} = \frac{c_2w n}{p^3 \log q}. \]

	First suppose that $	m\le d=\deg P\le d_*.$
By Lemma~\ref{lem: parseval refinement}, the contribution to the sum from a single prime $P$ is bounded by $
	\left(
	\frac{1+(V_*-1)\cf^{\,j(d)-2}}{q}
	\right)^d
	\le
	\left(\frac2q\right)^d.$
	A non-zero polynomial of degree at most $n-1$ has at most $n/m$ prime
	factors of degree at least $m$, so  the total contribution from this first
	range is at most
	\begin{equation}\label{eq: medium prime varying}
		\frac nm\left(\frac2q\right)^m
		=
		\frac nm
		\exp\left(-m\log\frac q2\right).
	\end{equation}	
	Now suppose that $\deg P>d_*$. For this range we return to the uniform
	bound from Lemma~\ref{lem: large primes N''}. Its proof, with $d_*$ in
	place of $m_2$, gives
	\[
	\mathbb{P}
	\left(
	\exists P:\ \deg P>d_*,\ P^2\mid f,\ f'\ne0
	\right)
	\le
	\frac{n-1}{d_*}\widetilde{\cf}^{\,d_*}.
	\]
	By~\eqref{eq: C1_asymptotic},
\[ \tilde{\cf}^{d_{*}} = \exp\left(- d_{*} \log(\tfrac1{\tilde{\cf}})\right) =O\left(\exp\left(- \frac{c_2w n}{p^3 \log q} \cdot \frac{\tilde{c}w}{qp^2} \right)\right) =O\left( \exp\left(- \frac{c_3w^2 n}{q p^5 \log q} \right)\right). \]
There are at most $n/d_{*}$ prime factors in this range, and
$
	\frac{n}{d_*}
	\le
	\frac{p^3\log q}{c_2w}$ by definition of $d_*$.
	The growth hypothesis~\eqref{eq: simultaneous growth} gives
	\[
\frac{w^2n}{qp^5\log q}
	\ge
	C\left(
	\log q+\log\frac1w
	\right).
	\]
	Taking $C$ sufficiently large allows
	$\frac{p^3\log q}{c_2w}$ to be absorbed into the exponential. Thus
	\begin{equation}\label{eq: high prime varying}
		\frac{n-1}{d_*}\widetilde{\cf}^{\,d_*}
		\le
		\exp\left(-c'\frac{w^2n}{qp^5\log q}\right).
	\end{equation}
Note that by~\eqref{eq: simultaneous growth} and the definition of $m$, \eqref{eq: medium prime varying} is much smaller than~\eqref{eq: high prime varying}. Hence, for small enough $c'>0$,
$
	\mathbb{P}_{\overline{\mu}}(N'')
	=
	O\left(
\exp\left(
-c'
\frac{w^2n}{qp^5\log q}
	\right)\right)$.
	Finally, we conclude the result by using
$
	\mathbb{P}_{\overline{\mu}}(N')
	-
	\mathbb{P}_{\overline{\mu}}(N'')
	\le
	\mathbb{P}_{\overline{\mu}}(f\text{ is squarefree})
	\le
	\mathbb{P}_{\overline{\mu}}(N')
$, our estimates of the errors and the main term from Lemma~\ref{lem: estimating N'}.
\end{proof}
\section{What happens when the Fourier Amplitude is 1?}\label{section: c=1}
In all our previous sections, the probability measures we studied were supported on at least two elements; this is, trivially, a necessary condition to ensure $\cf<1$. We consider an example where this is not enforced.

We use our previous notation; the random polynomial is $f=t^n+\sum_if_it^i\in\mathcal{M}_n,$ where $f_i$ are sampled according to probability measures $\mu_i$. Let $\overline{\mu} = (\mu_0, \dots, \mu_{n-1})$, where $\mu_i$ are defined as follows:
\begin{itemize}
	\item If $p\nmid i$, then let $\mu_i(0)=1$, and let $\mu_i(\alpha)=0$ for all $\alpha\ne 0$.
	\item If $p\mid i$, let $\mu_i$ be the uniform measure on all of $\FF_q$.
\end{itemize}
If the degree $n$ is a multiple of $p$, then using the property $\sum x_i^p=(\sum x_i)^p$, we may write $f=(t^{n/p}+\sum_{j=0}^{\lfloor (n-1)/p \rfloor}\tilde{f}_jt^j)^p$. This is a $p$-th power, and in particular divisible by a square, so the probability of being squarefree is $0$.

If $n$ is not a multiple of $p$, we use the fact that $f$ being squarefree is equivalent to $\gcd(f,f')=1$. In our case, $f'=nt^{n-1}$. The only prime factor of this polynomial is $t$, hence $f$ is squarefree if and only if it is not a multiple of $t^2$. By our definition of $\overline{\mu}$, this has probability $1-\mu_0(0)=(q-1)/q$.

Since the probability alternates between these two values forever, there is no asymptotic density, but both $0$ and $(q-1)/q$ are distinct from the value of $\dfrac{1-1/q}{1+1/q}$ quoted by Theorem~\ref{thm: general measure squarefree}.
\subsection{An exceptional example} We will take the smallest possible example of a subspace of a field $\FF_q$ which is not also a subfield.
Let $\mathbb{F}_8=\mathbb{F}_2(\alpha)$, where $\alpha^3+\alpha+1=0$, and let
\[
V=\operatorname{span}_{\mathbb{F}_2}\{1,\alpha\}
=\{0,1,\alpha,\alpha+1\}.
\]
For $n\ge 1$, let $S_n(V)$ be the set of monic polynomials
\[
f(t)=t^n+\sum_{j=0}^{n-1}c_jt^j\in\mathbb{F}_8[t]
\]
with $c_j\in V$ for every $j$, equipped with the uniform probability measure.

\begin{theorem}\label{thm: F8 subspace counterexample}
	For every $n\ge 16$,
	\[
	\mathbb{P}_{f\in S_n(V)}(f\text{ is squarefree})
	\le \left(\frac{15}{16}\right)^2\left(\frac{63}{64}\right)^4
	\approx 0.82525<\frac56.
	\]
	Consequently,
	\[
	\limsup_{n\to\infty}\mathbb{P}_{f\in S_n(V)}(f\text{ is squarefree})<\frac56.
	\]
\end{theorem}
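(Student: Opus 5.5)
The idea is to bound the squarefree probability from above by the probability that $f$ avoids $P^2\mid f$ for a few explicit linear primes $P=t-\beta$, $\beta\in\FF_8$. For these primes the divisibility conditions are $\FF_2$-affine in the coefficient vector $(c_0,\dots,c_{n-1})\in V^n$, and for $n\ge 16$ I expect them to be jointly independent.

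Since $(t-\beta)^2\mid f$ is equivalent to $f(\beta)=f'(\beta)=0$, consider the $\FF_2$-linear map
\[
\Phi_\beta\colon V^n\to\FF_8^2,\qquad (c_j)\mapsto \Bigl(\sum_j c_j\beta^j,\ \sum_{j\text{ odd}} c_j\beta^{j-1}\Bigr),
\]
where we used $j\equiv 1 \pmod 2$ in characteristic $2$. Then $(f(\beta),f'(\beta))=(\beta^n,n\beta^{n-1})+\Phi_\beta(c)$.

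For $\beta=0$ the event is $c_0=c_1=0$, which has probability $\tfrac1{16}$; this is the first factor $\tfrac{15}{16}$. For $\beta=1$ the image of $\Phi_1$ is $V\times V$, of size $16$ rather than $64$, because all $\beta^j=1$. The shift $(1,n)$ lies in $V\times V$ since $1\in V$. Hence the event $(t+1)^2\mid f$ has probability exactly $\tfrac1{16}$, giving the second factor $\tfrac{15}{16}$. This is exactly where $V$ failing to be a subfield bites.

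For $\beta\in\FF_8\setminus\FF_2$, the products $V\beta^j$ together span $\FF_8$ as soon as enough even and odd indices $j$ are present. So $\Phi_\beta$ is onto $\FF_8^2$, and the event has probability $\tfrac1{64}$. I would pick four such $\beta$ to account for the factors $(63/64)^4$.

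The key step is joint independence. Consider the combined map $\Phi=(\Phi_\beta)_{\beta\in B}\colon V^n\to\prod_{\beta\in B}\FF_8^2$, where $B$ consists of $0$, $1$ and the four chosen $\beta$. I want to show that its image is exactly the product of the individual images, which has $\FF_2$-dimension $4+4+4\cdot 6=32$. Since $f$ is uniform on $V^n$, which is an $\FF_2$-vector space, $\Phi(c)$ is then uniform on this product. The six events are therefore independent, and each has the probability computed above, because each shift lies in the corresponding image. This gives
\[
\mathbb{P}(f\text{ squarefree})\le \prod_{\beta\in B}\mathbb{P}\bigl((t-\beta)^2\nmid f\bigr)=\Bigl(\tfrac{15}{16}\Bigr)^2\Bigl(\tfrac{63}{64}\Bigr)^4 .
\]
The independence itself is purely a rank computation. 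Only the coefficients $c_0,\dots,c_{15}$ need to be used, since a surjection from a subset of the coordinates already suffices; this is why the threshold is $n\ge16$.

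The main obstacle is verifying this rank: that the $32\times 32$ (or $32\times 2n$) $\FF_2$-matrix of $\Phi$ restricted to $c_0,\dots,c_{15}$ has full rank $32$. This also determines which four nonbinary $\beta$ work; a natural attempt is a Frobenius orbit such as $\{\alpha,\alpha^2,\alpha^4\}$ together with one further element, and the choice may be forced by rank deficiencies. I would attempt a structural argument first, using CRT in $\FF_8[t]$ on $\prod_\beta(t-\beta)^2$, which has degree $12$, to reduce to a statement about $V$-combinations. I expect, though, to settle it by an explicit Gaussian elimination over $\FF_2$, or a short computer check in the paper's Python style. The $\limsup$ statement then follows immediately.
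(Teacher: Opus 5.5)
Your framework is the paper's. You restrict to $c_0,\dots,c_{15}$, show that $g\mapsto\bigl(g(r),g'(r)\bigr)_{r}$ is a bijection from $V^{16}$ onto $V^2\times V^2\times(\FF_8^2)^4$ (both of $\mathbb{F}_2$-dimension $32$), and read off independence after conditioning on the higher coefficients.

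\textbf{The gap.} You stop exactly where the content lies: you never fix the four non-binary points and never prove that the rank is $32$. As written, the argument yields only the individual probabilities $\tfrac1{16}$ and $\tfrac1{64}$. By themselves these give nothing better than $\tfrac{15}{16}$.

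\textbf{Your suggested choice fails.} Since $(t-\alpha)(t-\alpha^2)(t-\alpha^4)=t^3+t+1\in\mathbb{F}_2[t]$, write $g=g_1+\alpha g_\alpha$ with $g_1,g_\alpha\in\mathbb{F}_2[t]$. Then $(t^3+t+1)^2\mid g$ if and only if $(t^3+t+1)^2$ divides both $g_1$ and $g_\alpha$. That is only $12$ linear conditions over $\mathbb{F}_2$, so for $n\ge 6$ these three events occur together with probability $2^{-12}$, not $64^{-3}=2^{-18}$.

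Concretely, multiplication by a polynomial in $\mathbb{F}_2[t]$ preserves $V[t]$. So for any fourth point $\beta_4$, the kernel of your combined map on $V^n$ contains every $t^2(t+1)^2(t^3+t+1)^2R$ with $R\in V[t]$, $\deg R<n-10$ and $(t-\beta_4)^2\mid R$. This space has dimension at least $2n-26$. Hence the rank is at most $26<32$ for every $n$, and using more coefficients cannot help. The other orbit behaves the same way, since it is the root set of $t^3+t^2+1\in\mathbb{F}_2[t]$. So the four points must be two from each Frobenius orbit, and even then a verification is required.

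\textbf{What the paper does.} It takes $\mathcal{A}=\{0,1,\alpha,\alpha+1,\alpha^2,\alpha^2+1\}$ and carries out essentially your CRT idea.
If $g\in V^{16}$ lies in the kernel, then $H^2\mid g$ for $H=\prod_{r\in\mathcal{A}}(t-r)$. So $g=H^2Q$ with $\deg Q\le 3$, giving $12$ unknown bits $x_j,y_j,z_j$ where $q_j=x_j+y_j\alpha+z_j\alpha^2$.
The paper uses $H^2=t^{12}+t^{10}+(\alpha^2+\alpha+1)t^8+t^6+(\alpha^2+1)t^4+(\alpha+1)t^2$. Requiring the $\alpha^2$-coordinates of the coefficients of $t^{15},\dots,t^4$ to vanish gives twelve linear equations.
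These successively force the $z_j$, then the $x_j$, then the $y_j$ to vanish.
Injectivity plus equal dimensions then gives the bijection you need.

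This computation, or an equivalent verified rank check for a valid choice of points, is what your proposal is missing.
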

	In particular, the main-term expression in Theorem~\ref{thm: general measure squarefree} cannot be extended to arbitrary measures supported on a proper affine subspace. Indeed, substituting $q=8$ and $\mu_0(0)=\mu_1(0)=1/4$ into that expression gives
\[
\kappa=\frac{1}{1+1/8}\left(1-\frac1{16}\right)=\frac56.
\]
\begin{proof}
	We consider only the six linear factors indexed by
	\[
	\mathcal{A}=\{0,1,\alpha,\alpha+1,\alpha^2,\alpha^2+1\}\subset\mathbb{F}_8.
	\]
	For a polynomial $g(t)=\sum_{j=0}^{15}c_jt^j$ with $c_j\in V$, define
	\[
	\Phi(g)=\bigl(g(r),g'(r)\bigr)_{r\in\mathcal{A}}.
	\]
	Since $g(0),g'(0),g(1),g'(1)\in V$, the map $\Phi$ is an $\mathbb{F}_2$-linear map
	\[
	\Phi:V^{16}\longrightarrow V^2\times V^2\times
	\bigl(\mathbb{F}_8^2\bigr)^4.
	\]
	Both the domain and the codomain have dimension $32$ over $\mathbb{F}_2$. We claim that $\Phi$ is injective, and hence bijective.
We start by defining
	\[
	H(t)=\prod_{r\in\mathcal{A}}(t-r)
	=t^6+t^5+(\alpha^2+1)t^4+t^3+(\alpha+1)t^2
	+(\alpha^2+\alpha+1)t.
	\]
	If $g\in\ker\Phi$, then $(t-r)^2\mid g$ for every $r\in\mathcal{A}$, so $H(t)^2\mid g(t)$. Since $\deg g\le 15$ and $\deg H^2=12$, we may write
	\[
	g(t)=H(t)^2Q(t),\qquad Q(t)=q_0+q_1t+q_2t^2+q_3t^3.
	\]
	In characteristic $2$,
	\[
	H(t)^2=t^{12}+t^{10}+(\alpha^2+\alpha+1)t^8+t^6
	+(\alpha^2+1)t^4+(\alpha+1)t^2.
	\]
	Write
	\[
	q_j=x_j+y_j\alpha+z_j\alpha^2,
	\qquad x_j,y_j,z_j\in\mathbb{F}_2.
	\]
	Every coefficient of $g$ belongs to $V$, so its $\alpha^2$-coordinate is zero. Reading the coefficients of $t^{15},t^{14},\ldots,t^4$ gives
	\[
	\begin{aligned}
		z_3&=0, & z_2&=0, & z_1+z_3&=0, & z_0+z_2&=0,\\
		z_1+x_3+y_3&=0, & z_0+x_2+y_2&=0,
		& x_1+y_1+z_3&=0, & x_0+y_0+z_2&=0,\\
		z_1+x_3&=0, & z_0+x_2&=0,
		& x_1+y_3+z_3&=0, & x_0+y_2+z_2&=0.
	\end{aligned}
	\]
	These equations imply successively that all $z_j$, then all $x_j$, and finally all $y_j$ vanish. Thus $Q=0$ and $g=0$, proving that $\Phi$ is injective.
	
	Now let $n\ge 16$ and condition on the coefficients $c_{16},\ldots,c_{n-1}$, as well as on the leading coefficient. Since the contribution of $c_0,\ldots,c_{15}$ to the six pairs $\bigl(f(r),f'(r)\bigr)$ is given by the bijection $\Phi$, these pairs are uniformly distributed on
$	V^2\times V^2\times\bigl(\mathbb{F}_8^2\bigr)^4.$
	They are therefore independent. For $r=0,1$, the pair $\bigl(f(r),f'(r)\bigr)$ is uniform on $V^2$, and hence
	\[
	\mathbb{P}\bigl((t-r)^2\mid f\bigr)=\frac1{16}.
	\]
	For each of the remaining four elements of $\mathcal{A}$, the pair is uniform on $\mathbb{F}_8^2$, and hence
	\[
	\mathbb{P}\bigl((t-r)^2\mid f\bigr)=\frac1{64}.
	\]
	A squarefree polynomial is not divisible by $(t-r)^2$ for any $r\in\mathcal{A}$. Therefore
	\[
	\mathbb{P}_{f\in S_n(V)}(f\text{ is squarefree})
	\le \left(1-\frac1{16}\right)^2
	\left(1-\frac1{64}\right)^4
	=\left(\frac{15}{16}\right)^2\left(\frac{63}{64}\right)^4
	<\frac56,
	\]
	as required.
\end{proof}

\section*{Acknowledgments}
This work was completed as part of my PhD program at the Technion -- Israel Institute of Technology. It is supported by the Israel Science Foundation (grant no. 2088/24).

I would like to thank my PhD supervisor, Dr Ofir Gorodetsky, without whose support it would not be possible to publish this paper. I have enjoyed working on this problem that he suggested and meeting with him regularly to discuss mathematics. His advice has helped me to avoid many wrong directions, and his patience and understanding has helped me through difficult times.
 
		\printbibliography[heading=bibintoc] 
	\end{document}